\newtheorem{theorem}{Theorem}[section]
\newtheorem{lemma}[theorem]{Lemma}
\newtheorem{claim}[theorem]{Claim}
\newtheorem{cor}[theorem]{Corollary}
\newtheorem{conjecture}[theorem]{Conjecture}
\newtheorem*{observation*}{Observation}
\newtheorem{proposition}[theorem]{Proposition}
\newtheorem{problem}[theorem]{Problem}
\newtheorem{question}[theorem]{Question}
\newtheorem*{question*}{Question}
\newenvironment{definition*}
  {
   \innerdefinition}
  {\endinnerdefinition}
\theoremstyle{definition}
\newtheorem{defn}[theorem]{Definition}
\theoremstyle{remark}
\newcounter{propcounter}
\newenvironment{poc}{\begin{proof}[Proof of claim]}{\end{proof}}
\newcommand{\ex}{\mathrm{ex}}
\tikzstyle{aNode} = [circle, fill = black]
\tikzstyle{bNode} = [circle,draw = black, thick]
\newcommand{\ppoints}[1]{%
\begin{tikzpicture}[inner sep = 0.7pt, #1]%
\node (1) at (0,-2) [aNode]{};
\node (3) at (1.5,-2) [aNode]{};
\node (2) at (0.75,-1) [aNode]{};
\end{tikzpicture}%
}
\def\points{\ppoints{scale=0.08}}
\def\co{\mathrm{co}}
\title{On $3$-graphs with vanishing codegree Tur\'{a}n density}
\author{Laihao Ding\thanks{School of Mathematics and Statistics, and Key Laboratory of Nonlinear Analysis \& Applications (Ministry of Education), Central China Normal University, Wuhan 430079, China and Extremal Combinatorics and Probability Group (ECOPRO),  Institute for Basic Science (IBS), Daejeon, South Korea. Supported by the National Nature Science Foundation of China (11901226), the China Scholarship Council and IBS-R029-C4. \textbf{Email address}: dinglaihao@ccnu.edu.cn}
\and Ander Lamaison\thanks{Extremal Combinatorics and Probability Group (ECOPRO),  Institute for Basic Science (IBS), Daejeon, South Korea. Supported by IBS-R029-C4. \textbf{Email address}: \{ander,hongliu\}@ibs.re.kr}
\and Hong Liu\footnotemark[2]
\and Shuaichao Wang\thanks{Center for Combinatorics and LPMC, Nankai University, Tianjin 300071, China and Extremal Combinatorics and Probability Group (ECOPRO),  Institute for Basic Science (IBS), Daejeon, South Korea. Supported by the China Scholarship Council and IBS-R029-C4. \textbf{Email address}: wsc17746316863@163.com } 
\and Haotian Yang\thanks{Taishan College, Shandong University, Jinan 250100, China and Extremal Combinatorics and Probability Group (ECOPRO),  Institute for Basic Science (IBS), Daejeon, South Korea. Supported by Seed Fund Program for International Research Cooperation of Shandong University and IBS-R029-C4. \textbf{Email address}: 202017091012@mail.sdu.edu.cn }}
\begin{document}
\maketitle

\begin{abstract}
 For a $k$-uniform hypergraph (or simply $k$-graph) $F$, the codegree Tur\'{a}n density $\pi_{\co}(F)$ is the supremum over all $\alpha$ such that there exist arbitrarily large $n$-vertex $F$-free $k$-graphs $H$ in which every $(k-1)$-subset of $V(H)$ is contained in at least $\alpha n$ edges. Recently, it was proved that for every $3$-graph $F$, $\pi_{\mathrm{co}}(F)=0$ implies $\pi_{\points}(F)=0$, where $\pi_{\points}(F)$ is the uniform Tur\'{a}n density of $F$ and is defined as the supremum over all $d$ such that there are infinitely many $F$-free $k$-graphs $H$ satisfying that any induced linear-size subhypergraph of $H$ has edge density at least $d$. 

  In this paper, we introduce a layered structure for $3$-graphs which allows us to obtain the reverse implication: every layered $3$-graph $F$ with $\pi_{\points}(F)=0$ satisfies $\pi_{\mathrm{co}}(F)=0$. Along the way, we answer in the negative a question of Falgas-Ravry, Pikhurko, Vaughan and Volec [J. London Math. Soc., 2023] about whether $\pi_{\points}(F)\leq\pi_{\mathrm{co}}(F)$ always holds. In particular, we construct counterexamples $F$ with positive but arbitrarily small $\pi_{\mathrm{co}}(F)$ while having $\pi_{\points}(F)\ge 4/27$.

\end{abstract}

\section{Introduction}
Given a $k$-uniform hypergraph $F$ (or simply \emph{$k$-graph}), the \emph{Tur\'{a}n number} of $F$, denoted by ${\rm ex}(n,F)$, is the maximum number of edges in an $n$-vertex $k$-graph $H$ containing no copy of $F$. Within the field of extremal combinatorics, Tur\'an-type problems represent one of the most important topics of study, dating back to the theorems of Mantel and Tur\'an in the early 20th century. In the decades since, Tur\'an-type problems have found applications and numerous connections in other fields, ranging from error-correcting codes in information theory to additive number theory to sphere packing, just to name a few. Targeting on the limit behavior, one may define the \emph{Tur\'{a}n density} 
$$\pi(F):=\lim\limits_{n\to \infty}\frac{\ex(n,F)}{\binom{n}{k}}.$$
A standard averaging argument shows that this limit always exists.
While Tur\'{a}n densities are well-understood for graphs (i.e., $2$-graphs), determining the Tur\'{a}n density of a $k$-graph becomes notoriously difficult when $k\geq 3$. Despite much effort and countless attempts, even the Tur\'{a}n densities of the $3$-graphs on four vertices with three and four edges, denoted by $K_4^{(3)-}$ and $K_4^{(3)}$ respectively, are still unknown. Determining the value of $\pi(K_4^{(3)})$ is one of the major open problems in extremal combinatorics, with Erd\H{o}s~\cite{erdosreward} offering a \$500 reward for a solution. 

The Tur\'{a}n density $\pi(F)$ can also be viewed asymptotically as the largest (normalized) minimum degree of an $F$-free $k$-graph, i.e.~the supremum over all $d$ such that there are arbitrarily large $n$-vertex $F$-free $k$-graphs $H$ in which every vertex is contained in at least $d{n \choose k-1}$ edges in $H$. A natural variation of the Tur\'{a}n density, introduced by Mubayi and Zhao \cite{mubayi2007co}, is the codegree Tur\'{a}n density defined as follows. Given a $k$-graph $H$, the \emph{degree} $d_{H}(S)$ of a set of vertices $S$ is the number of edges containing it. The \emph{minimum codegree} $\delta_{\co}(H)$ of $H$ is the minimum of $d_{H}(S)$ over all $(k-1)$-subsets $S$ of $V(H)$. The \textit{codegree Tur\'{a}n number} $\ex_{\co}(n,F)$ is the maximum $\delta_{\co}(H)$ an $n$-vertex $F$-free $k$-graph $H$ can admit, and the \textit{codegree Tur\'{a}n density} $\pi_{\co}(F)$ is defined as 
$$\pi_{\co}(F)=\lim\limits_{n\to \infty}\frac{\ex_{\co}(n,F)}{n}.$$ 
This limit always exists (\cite{mubayi2007co}), and it is not hard to see that $\pi_{\co}(F)\leq\pi(F)$ for any $k$-graph $F$. In particular, $\pi_{\co}(F)=\pi(F)$ when $F$ is a graph.

For $k\geq3$, similar as the Tur\'{a}n density, determining the codegree Tur\'{a}n density of a $k$-graph seems also very difficult in general. In the late 1990s, Nagle \cite{K43-nagle} and Czygrinow and Nagle \cite{K43nagle} conjectured that $\pi_{\co}(K_4^{(3)-})=\frac{1}{4}$ and $\pi_{\co}(K_4^{(3)})=\frac{1}{2}$, respectively. The $\pi_{\co}(K_4^{(3)-})$ case was only recently settled by Falgas-Ravry, Pikhurko, Vaughan and Volec \cite{K43-codegree} via the flag algebra technique. There are few sporadic 3-graphs whose codegree Tur\'an densities are known: the Fano plane~\cite{mubayi2005co}, $F_{3,2}$ with $V(F_{3,2})=[5]$ and $E(F_{3,2})=\{123,124,125,345\}$~\cite{falgas2015codegree}, and $C_{\ell}^{(3)-}$ with $\ell\geq5$, the tight cycle of length $\ell$ with one edge removed~\cite{piga2022codegree}.

Recently, Piga and Sch\"{u}lke \cite{piga2023hypergraphs} showed that surprisingly the codegree Tur\'{a}n density can be arbitrarily close to zero for $k$-graphs when $k\geq3$. Among all known variations of Tur\'{a}n density \cite{l2norm,positive,l-degree,ldegreeposi,reiher2018hypergraphs}, this is the first example with zero being an accumulation point. For instance, there is no $k$-graph with its Tur\'{a}n density and positive codegree Tur\'{a}n density lying in $(0,k!/k^k)$ and $(0,1/k)$ respectively, and no $3$-graph with its uniform Tur\'{a}n density lying in $(0,1/27)$. So it would be very interesting if one can characterize all $k$-graphs with zero codegree Tur\'{a}n density. 

\begin{problem}\label{chac}
 For $k\geq 3$, characterize all $k$-graphs $F$ with $\pi_{\co}(F)=0$.
\end{problem}

Based on the result of Piga and Sch\"{u}lke \cite{piga2023hypergraphs}, \cref{chac} is likely to be very challenging as one cannot directly mimic the known characterization of the usual Tur\'{a}n density or other variations being zero by  a \emph{single} lower bound construction to avoid all $k$-graphs with positive codegree Tur\'{a}n densities.


\subsection{A necessary condition} 

For real numbers $d\in[0,1]$ and $\eta>0$, an $n$-vertex $k$-graph $H$ is said to be \emph{uniformly $(d,\eta)$-dense} if for all $U\subseteq V(H)$, it holds that $\left| \binom{U}{k}\cap E(H)\right|\geq d\binom{|U|}{k}-\eta n^k.$
Given a $k$-graph $F$, the \emph{uniform Tur\'{a}n density} of $F$ is defined as 
\begin{align*}\label{dot-turan-dense}
\pi_{\points}(F) = \sup \{ d\in [0,1] & : \text{for\ every\ } \eta>0 \ \text{and\ } n_0\in \mathbb{N},\ \text{there\ exists\ an\ } F \text{-free}, \\
&\quad \text{uniformly}\ (d,\eta) \text{-dense}\ k \text{-graph\ } H\ \text{with\ } |V(H)|\geq n_0 \}.
\end{align*}
Erd\H{o}s and S\'{o}s \cite{erdossos} first considered the uniform Tur\'{a}n problems for $3$-graphs and conjectured that $\pi_{\points}(K_4^{(3)-})=\frac{1}{4}$. This conjecture was recently confirmed by Glebov, Kr\'{a}l' and Volec \cite{K43-dan} using flag algebra, and later by Reiher, R\"{o}dl and Schacht \cite{K43-rodl} via the hypergraph regularity method. For $K_4^{(3)}$, a construction due to R\"{o}dl \cite{K43rodl} shows that $\pi_{\points}(K_4^{(3)})\geq\frac{1}{2}$, but whether $\frac{1}{2}$ is the correct value of $\pi_{\points}(K_4^{(3)})$ still remains open. Since then, the hypergraph regularity method has been widely used in this area and many results have been obtained \cite{cycledan,Bjarne,1/27dan,zhou1,reiher2018hypergraphs}. In particular, Reiher, R\"{o}dl and Schacht \cite{reiher2018hypergraphs} characterized all $3$-graphs with vanishing uniform Tur\'{a}n density, and provided a construction showing that the uniform Tur\'{a}n density cannot lie in the interval $(0,\frac{1}{27})$. 
For more results and problems on this topic and other variants, we refer the readers to~\cite{orderfive,ding1,ding2,lenz,zhou2,reihersurvey,K43dotedge,manteluniform}.

In \cite{unico}, Ding, Liu, Wang and Yang gave a necessary condition for a $3$-graph having vanishing codegree Tur\'{a}n density using its uniform Tur\'{a}n density.

\begin{theorem}[Ding, Liu, Wang and Yang, \cite{unico}]\label{main}
    Let $F$ be a $3$-graph. If $\pi_{\mathrm{co}}(F)=0$, then $\pi_{\points}(F)=0$.
\end{theorem}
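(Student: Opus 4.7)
The plan is to prove the contrapositive: if $\pi_{\points}(F) > 0$, then $\pi_{\co}(F) > 0$. Assume $\pi_{\points}(F) \ge 2d$ for some $d > 0$, so by definition, for every $\eta > 0$ and every $n_0$ there exists an $F$-free, uniformly $(2d, \eta)$-dense $3$-graph $H$ on $N \ge n_0$ vertices. The goal is to extract from such an $H$ an induced sub-hypergraph $H[U]$ with $|U|$ arbitrarily large and $\delta_{\co}(H[U]) \ge \alpha |U|$ for some fixed $\alpha = \alpha(d) > 0$. Since $H[U]$ is automatically $F$-free, the resulting family would contradict $\pi_{\co}(F) = 0$.

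The heart of the argument is the following key lemma that I would aim to prove: for each $d > 0$ there exist constants $\alpha, \beta > 0$ and a threshold $\eta_0 > 0$ such that every uniformly $(d, \eta_0)$-dense $3$-graph $H$ on $N$ vertices contains some $U \subseteq V(H)$ with $|U| \ge \beta N$ and $\delta_{\co}(H[U]) \ge \alpha |U|$. I would attempt this via an iterative cleaning procedure: initialise $U_0 = V(H)$, and while $|U_i| \ge \beta N$ and there exists a pair $\{u,v\} \subseteq U_i$ with $d_{H[U_i]}(\{u,v\}) < \alpha |U_i|$, remove a suitably chosen vertex from such a pair to form $U_{i+1}$. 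If the process halts with $|U|$ still at least $\beta N$, then $H[U]$ has the required minimum codegree.

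The main obstacle will be verifying that the cleaning does terminate before $|U|$ drops below $\beta N$. The naive accounting fails because deleting a single vertex $v$ can remove as many as $\binom{|U|}{2}$ edges from $H[U]$. To circumvent this, I would monitor $|E(H[U_i])|$ across iterations and exploit the fact that uniform density of $H$ forces $|E(H[U])| \ge d\binom{|U|}{3} - \eta N^3$ for every subset $U$, so provided $\eta \ll d\beta^3$ the induced subhypergraph retains a constant fraction of its potential edges whenever $|U| \ge \beta N$. Choosing at each step the vertex of smaller degree within $H[U_i]$ from the offending low-codegree pair, one might telescope the losses and derive a contradiction with the uniform-density lower bound unless the process halts with $|U| \ge \beta N$.

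If the direct cleaning argument encounters trouble, an alternative two-stage strategy is available. First, iterate the uniform-density condition to locate a large subset $W$ in which only a small number of pairs are low-codegree inside $H[W]$. Then let $U$ be a random subset of $W$ of moderate size and apply Chernoff concentration to argue that every pair with codegree $\ge \alpha |W|$ in $H[W]$ retains codegree $\ge \alpha |U|/2$ in $H[U]$; a constant number of targeted vertex deletions then eliminates the few remaining bad pairs. In either form, the crucial step is converting the global uniform-density information into a local linear minimum-codegree guarantee on a linear-sized subhypergraph.
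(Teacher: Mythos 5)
Your plan hinges on the following key lemma: for each $d>0$ there exist constants $\alpha,\beta,\eta_0>0$ such that every uniformly $(d,\eta_0)$-dense $3$-graph on $N$ vertices contains a subset $U$ with $|U|\ge\beta N$ and $\delta_{\co}(H[U])\ge\alpha|U|$. Unfortunately, this lemma is false, and the present paper itself provides the refutation. By \cref{counterex}, for every $\varepsilon>0$ there exists a $3$-graph $F$ with $\pi_{\points}(F)\ge 4/27$ and $\pi_{\co}(F)\le\varepsilon$. Now apply your lemma with $d=2/27$ and let $\alpha=\alpha(2/27)>0$ be the resulting constant. Pick $\varepsilon<\alpha$ and the corresponding $F$. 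Since $\pi_{\points}(F)>2/27$, there are arbitrarily large $F$-free, uniformly $(2/27,\eta_0)$-dense $3$-graphs $H$. If your lemma held, each such $H$ would yield an $F$-free induced subhypergraph $H[U]$ with $|U|$ arbitrarily large and $\delta_{\co}(H[U])\ge\alpha|U|$, forcing $\pi_{\co}(F)\ge\alpha>\varepsilon$ --- a contradiction. So no vertex-deletion or random-subsampling procedure can extract a linear minimum codegree from uniform density alone; the two notions are genuinely incomparable in this direction.

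The intuitive reason the cleaning process fails is also worth spelling out: uniform density gives lower bounds on $|E(H[U])|$ for all linear-size $U$, but says nothing about the distribution of codegrees. Deleting a vertex $v$ from $U_i$ can create entirely new low-codegree pairs (those whose coneighborhood inside $U_i$ was concentrated near $v$ or at vertices deleted earlier), so there is no monotone potential function for the iteration. In contrast, the actual proof of \cref{main} in~\cite{unico} works with the structural characterization of $\pi_{\points}(F)=0$ due to Reiher, R\"odl and Schacht (\cref{rodl}/\cref{permutation} here): assuming $\pi_{\points}(F)>0$, every labeling of $V(F)$ forces a monotone $P_3$ in some link graph, and this combinatorial obstruction is used to build explicit $F$-free $(2,1)$-type constructions with linear minimum codegree, certifying $\pi_{\co}(F)>0$. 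That route is not a density-extraction argument but a direct construction guided by the link-graph characterization, and it is the correct one to pursue here.
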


The following closely related question was recently raised by Falgas-Ravry, Pikhurko, Vaughan and Volec \cite{K43-codegree}.

\begin{question}[\cite{K43-codegree}]\label{ques}
For any $3$-graph $F$, is it true that $\pi_{\points}(F)\leq\pi_{\mathrm{co}}(F) ?$
\end{question}

Falgas-Ravry and Lo \cite{loanswer} provided a positive answer to this question under a stronger uniform denseness assumption. We answer~\cref{ques} in the negative by providing an infinite sequence of counterexamples.

\begin{theorem}\label{counterex}
For every $\varepsilon>0$, there exists a $3$-graph $F$ with $\pi_{\points}(F)\geq 4/27$ and $\pi_{\mathrm{co}}(F)\leq\varepsilon$.
\end{theorem}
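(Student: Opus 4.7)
The plan is to construct, for each $\varepsilon>0$, a single $3$-graph $F=F_{\varepsilon}$ realising both inequalities simultaneously by combining a uniformly dense \emph{lower-bound host} of density $4/27$ with a Piga--Sch\"ulke-style \emph{upper-bound argument} establishing $\pi_{\co}(F)\le\varepsilon$. The two ingredients are then glued so that the host witnesses $\pi_{\points}(F)\ge 4/27$ while the argument witnesses $\pi_{\co}(F)\le\varepsilon$.

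The first step is to exhibit a $3$-graph $H^{\star}$ that is uniformly $(4/27,\eta)$-dense for every $\eta>0$. A natural candidate is obtained by colouring the vertex set with a small number of colours and declaring a triple to be an edge exactly when its colour pattern lies in a prescribed subset, chosen so that every linear-size induced subhypergraph spans at least $\tfrac{4}{27}\binom{|U|}{3}-\eta n^{3}$ edges; one verifies the uniform denseness directly from a calculation on coloured triples. The second step is to isolate a small local configuration --- a ``forbidden pattern'' on a bounded number of vertices --- which the colour-pattern rule provably prevents $H^{\star}$ from ever containing. The target $F$ is then forced to contain this forbidden pattern, so that $H^{\star}$ is automatically $F$-free, yielding $\pi_{\points}(F)\ge 4/27$.

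For the codegree upper bound, the plan is to start from the Piga--Sch\"ulke family of $3$-graphs whose codegree Tur\'an density tends to zero, and to graft the forbidden pattern onto a ``peripheral'' piece of one of its members so that the underlying Piga--Sch\"ulke embedding strategy --- an iterated absorption or supersaturation-then-embedding argument inside any host with $\delta_{\co}\ge\varepsilon n$ --- still produces a copy of the modified $F$ in every such host. The main obstacle is the structural tension between the two requirements: making $\pi_{\co}(F)$ small pushes $F$ towards being generic enough to embed in essentially every linearly dense host (including $H^{\star}$), whereas forbidding $F$ inside $H^{\star}$ requires $F$ to contain a rigid, non-generic sub-configuration incompatible with the colour-pattern rule. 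Reconciling these --- choosing the grafting carefully so that the forbidden pattern cannot be realised inside $H^{\star}$ while still admitting a codegree-driven supersaturation embedding in an arbitrary linear-codegree host --- is the crux of the proof, and verifying that the Piga--Sch\"ulke machinery survives the grafting is expected to be the most technical step.
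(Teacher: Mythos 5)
There is a genuine gap: the proposal is a plan, not a proof, and the plan as stated runs into obstacles that the paper's actual construction deliberately avoids. You never specify the $3$-graph $F$; the construction is deferred to an unresolved ``grafting'' step, the forbidden pattern is never identified, and you explicitly acknowledge that ``verifying that the Piga--Sch\"ulke machinery survives the grafting'' is ``the most technical step'' without resolving it. In particular, grafting a rigid sub-configuration onto a Piga--Sch\"ulke zycle $Z_\ell^{(3)-}$ in general destroys the layered structure that makes $Z_\ell^{(3)-}$ embeddable in every linear-codegree host, so there is no reason to expect the embedding argument to survive. The tension you identify between ``generic enough to embed'' and ``rigid enough to avoid'' is real, and simply naming it does not resolve it.

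The paper's route is different in two essential ways. First, it does not start from a Piga--Sch\"ulke graph at all: it takes $\mathcal{F}_k$ to be the family of all $k$-vertex $3$-graphs of minimum codegree at least two and defines $\widetilde{F}_k$ to be their tensor product. The codegree bound $\pi_{\co}(\widetilde{F}_k)\le\varepsilon$ then falls out of a two-line supersaturation-plus-pigeonhole argument (a random $k$-set of $H$ has codegree $\ge 2$ with probability $>1/2$ when $k$ is large, so some $F_i\in\mathcal{F}_k$ appears $\Omega(n^k)$ times, whence $H$ contains a blowup of $F_i$ and hence $\widetilde{F}_k$). There is no delicate embedding machinery to preserve. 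Second, the ``forbidden pattern'' is not a local configuration grafted onto $F$; it is a global property, namely that $\widetilde{F}_k$ has so many pairs of coordinatewise-disjoint vertices (pairs of codegree $\ge 2$) that it cannot be consistently ordered to fit R\"odl's red/blue random-tournament host. The proof of $\pi_{\points}(\widetilde{F}_k)\ge 4/27$ picks a minimal counterexample vertex $v$, uses codegree $\ge 2$ to produce vertices $u,u',w$ with $u,u',v,w$ pairwise disjoint, and then derives a colour contradiction on the pair $vw$. This is exactly the kind of argument a local forbidden-pattern framework would not capture, and you would need to discover it. So the approach you outline, even if completable, is genuinely different and substantially harder than what the paper does; as written it is missing the central construction and both halves of the verification.
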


Note that Theorem \ref{main} together with \cref{counterex} provides an alternative proof of the result of Piga and Sch\"{u}lke \cite{piga2023hypergraphs} that the codegree Tur\'{a}n density can be arbitrarily close to zero for $3$-graphs.

\subsection{A sufficient condition} 

At the moment, there are very few known non-trivial (i.e. not tripartite) examples of $3$-graphs with zero codegree Tur\'an density. These are limited to tight cycles of length $\ell\geq5$ with one edge removed, $C_\ell^{(3)-}$, proved by by Piga, Sales, and Sch\"{u}lke \cite{piga2022codegree}, and zycles of length $\ell\geq 3$ with one edge removed, $Z_\ell^{(3)-}$, proved by Piga and Sch\"{u}lke \cite{piga2023hypergraphs}. Here, the \emph{zycle} of length $\ell$ is defined as the $3$-graph $F$ with $V\left(F\right)=\cup_{i=1}^{\ell}\{v_i,u_i\}$ and $E\left(F\right)=\left(\cup_{i=1}^{\ell-1}\{u_iv_iu_{i+1},u_iv_iv_{i+1}\}\right)\cup\{u_{\ell}v_{\ell}u_{1},u_{\ell}v_{\ell}v_{1}\}$.

 In this paper, we prove that $\pi_{\co}(F)=0$ is equivalent to $\pi_{\points}(F)=0$ for a large class of $3$-graphs $F$, which we call layered $3$-graphs and are defined as follows with a hierarchical structure and some kind of ``$1$-degenerateness''. In particular, our result generalizes those in~\cite{piga2022codegree,piga2023hypergraphs}: the above two known examples of 3-graphs with zero codegree Tur\'an density are layered 3-graphs.

A $3$-graph $F$ is called \textit{layered} if there exists a function $f:V(F)\rightarrow \mathbb{N}$ such that the following conditions hold.

\stepcounter{propcounter}
\begin{enumerate}[label = \rm({\bfseries \Alph{propcounter}\arabic{enumi}})]            \item\label{uniquemax} In each edge $uvw$ there is one vertex whose label is strictly greater than the other two.
    \item\label{fixmax} If two edges $uvw$ and $u'v'w'$ satisfy $\max\{f(u),f(v),f(w)\}=\max\{f(u'), f(v'), f(w')\}$, then the labels in both edges are a permutation of each other.
    \item\label{fixtwo} If two edges $uvw$ and $u'v'w'$ satisfy $f(u)=f(u')$ and $f(v)=f(v')$, then $f(w)=f(w')$.
\end{enumerate}

\noindent We call $f$ a \textit{layered function} of $F$, and call the set of vertices $v\in V(F)$ such that $f(v)$ is the $i$-th smallest integer in the range of $f$ the \textit{$i$-th layer} of $F$. 

\smallskip

\begin{theorem}\label{layered}
    If $F$ is a layered $3$-graph, then $\pi_{\mathrm{co}}(F)=0$ if and only if $\pi_{\points}(F)=0$.
\end{theorem}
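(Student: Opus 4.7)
The forward implication $\pi_{\co}(F)=0\Rightarrow\pi_{\points}(F)=0$ is Theorem \ref{main} and does not use the layered hypothesis; we therefore focus on the converse. By contrapositive, it suffices to show that if $F$ is layered and $\pi_{\co}(F)\geq\alpha>0$, then $\pi_{\points}(F)>0$.

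The strategy is to fix an $F$-free 3-graph $H$ with $\delta_{\co}(H)\geq\alpha|V(H)|$, which exists for arbitrarily large $|V(H)|$ by hypothesis, and build from it a uniformly dense $F$-free 3-graph $\widetilde H$ of arbitrary size. The natural candidate is an iterated substitution of $H$ in itself whose vertex set is organized into hierarchical levels matched to the layers of $F$: a triple of $t$-tuples is declared an edge exactly when, at the first coordinate where the three tuples do not agree, the three entries form an edge of $H$. A standard counting argument using the codegree bound on $H$ then shows $\widetilde H$ is uniformly $(d,\eta)$-dense for some $d=d(\alpha)>0$, reducing the problem to verifying that $\widetilde H$ is $F$-free.

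The crux is proving that $\widetilde H$ is $F$-free. Suppose for contradiction that $\phi\colon V(F)\hookrightarrow V(\widetilde H)$ is an embedding and, for each edge $e$ of $F$, assign to it the hierarchical level $\ell(e)$ on which $\phi(e)$ first distinguishes its three vertices. Property \ref{uniquemax} guarantees that each edge of $F$ has a well-defined apex vertex under the layer function $f$, which we align with $\ell(e)$; property \ref{fixmax} forces edges sharing an apex to satisfy $\ell(e)=\ell(e')$, since otherwise their coordinate-wise behaviour would be incompatible; and property \ref{fixtwo} provides the rigidity needed to identify the coordinate projection with the action of $f$ on $V(F)$, allowing us to descend to an embedding of $F$ into a single copy of $H$, contradicting its $F$-freeness.

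\textbf{Main obstacle.} The delicate part is the descent step, in which the three layered axioms \ref{uniquemax}--\ref{fixtwo} must be used in concert to prevent embeddings of $F$ that zig-zag between coordinate levels of $\widetilde H$. Each axiom plays an essential role: relaxing any one of them permits such mixed embeddings, consistent with the (non-layered) counterexamples to Question~\ref{ques} witnessed by Theorem \ref{counterex}. The argument should proceed by induction on the number of layers of $F$, peeling off the top layer at each step and invoking the minimum codegree of $H$ to locate the apex vertices of the embedding at the corresponding level of the hierarchy; calibrating the depth of iteration against the desired uniform density parameter is a secondary technical issue but should only require standard counting.
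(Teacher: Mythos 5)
Your proposal takes the contrapositive direction, attempting to turn an $F$-free $3$-graph $H$ with linear minimum codegree into an $F$-free uniformly dense $3$-graph via an iterated substitution on $V(H)^t$. This is genuinely different from the paper, which proves $\pi_{\points}(F)=0\Rightarrow\pi_{\co}(F)=0$ directly by \emph{embedding} $F$ into any $H$ with $\delta_{\co}(H)\geq\varepsilon n$, inducting on the number of layers and using Ramsey-theoretic lemmas about half-bipartite graphs in $\varepsilon$-dense graph distributions. Unfortunately, your approach has a fatal gap at the first step.

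The claim that a ``standard counting argument using the codegree bound on $H$ shows $\widetilde H$ is uniformly $(d,\eta)$-dense'' is false. Linear minimum codegree in $H$ does not control the edge density inside a linear-sized subset $A\subseteq V(H)$, which is exactly what uniform density of the first-coordinate slice of $\widetilde H$ requires. Concretely, let $H$ be the complete balanced tripartite $3$-graph on parts $A,B,C$, so $\delta_{\co}(H)\geq n/3$. Take $U=\{u\in V(H)^t: u_1\in A\}$, a subset of size $n^t/3$. Since $H[A]$ has no edges, any edge $\{u,v,w\}$ of $\widetilde H[U]$ must satisfy $u_1=v_1=w_1$, so $\widetilde H[U]$ has edge density $O\left(1/|A|^2\right)=o(1)$. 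Thus $\widetilde H$ is not uniformly $(d,\eta)$-dense for any fixed $d>0$ once $\eta$ is small, regardless of $t$. No calibration of the iteration depth can fix this: the obstruction is structural, not quantitative. This is in line with the fact (Theorem~\ref{counterex}) that $\pi_{\co}$ can be small while $\pi_{\points}$ is large, so there cannot be a generic transfer from codegree-dense $F$-free constructions to uniformly dense ones; any such transfer must use the layered structure of $F$ already in the construction, not only in the $F$-freeness verification.

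The descent step is also unsubstantiated. You assert that the apex vertex of each edge under the layer function $f$ can be ``aligned'' with the level $\ell(e)$ of the embedded triple, but $\ell$ is dictated by the arbitrary embedding $\phi$ and the coordinate structure of $\widetilde H$, while $f$ is an intrinsic feature of $F$; there is no mechanism forcing them to agree, and A2 does not by itself rule out two edges of $F$ that share an apex being embedded at different coordinate levels. To make progress along the paper's lines, you would instead want to characterize $\pi_{\points}(F)=0$ for layered $F$ via a labeling with half-bipartite link graphs (Lemmas~\ref{permutation}, \ref{strengthen}, \ref{linked}), extract from $\delta_{\co}(H)\geq\varepsilon n$ an $\varepsilon$-dense graph distribution, apply a Ramsey argument to locate a vertex set on which every half-bipartite graph appears with positive probability (Lemmas~\ref{halfbip}, \ref{hb}), and then build the embedding of $F$ into $H$ layer by layer using Lemmas~\ref{graphsum}, \ref{addtripartite}, \ref{add21}.
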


\cref{layered} gives a sufficient condition for a 3-graph to have vanishing codegree Tur\'an density: if a $3$-graph $F$ is layered and satisfies $\pi_{\points}(F)=0$, then $\pi_{\co}(F)=0$. While $\pi_{\points}(F)=0$ is indispensable by Theorem \ref{main}, we believe that the layered structure is also necessary. We propose the following conjecture characterizing $3$-graphs with vanishing codegree Tur\'an density. 

\begin{conjecture}\label{Conj1}
    For a $3$-graph $F$, $\pi_{\co}(F)=0$ if and only if $F$ is layered and satisfies $\pi_{\points}(F)=0$.
\end{conjecture}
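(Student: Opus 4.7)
The plan is to proceed by induction on the \emph{rank} $(k, |L_k|)$ of $F$ in lexicographic order, where $k$ is the number of layers of a fixed layered function $f$. The forward direction $\pi_{\co}(F) = 0 \Rightarrow \pi_{\points}(F) = 0$ is already \cref{main}, so I would focus on the converse. The base case, $F$ edgeless, is vacuous (it is forced when $k=1$ by \ref{uniquemax}). For the inductive step, I would pick $v \in L_k$ and set $F_0 = F - v$: the restriction of $f$ still satisfies \ref{uniquemax}--\ref{fixtwo}, so $F_0$ is layered; $F_0 \subseteq F$ gives $\pi_{\points}(F_0) \leq \pi_{\points}(F) = 0$; and the rank of $F_0$ is strictly smaller (either $(k, |L_k|-1)$ or $(k-1, |L_{k-1}|)$). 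By the inductive hypothesis, $\pi_{\co}(F_0) = 0$. Fix $\alpha > 0$ and let $H$ be an $n$-vertex $3$-graph with $\delta_{\co}(H) \geq \alpha n$, $n$ large.

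By \ref{fixmax}, for every $v \in L_k$ the ``down-edges''
$$
D_v := \{(u, w) : \{v, u, w\} \in E(F)\}
$$
all lie in a single fixed pair of lower layers $(L_a, L_b)$, and \ref{fixtwo} imposes a linearity condition at the label level of the edges at $v$. For an embedding $\phi : V(F_0) \hookrightarrow V(H)$ of $F_0$ in $H$, define the candidate set
$$
C_v(\phi) := \bigcap_{(u, w) \in D_v} N_H(\phi(u), \phi(w)),
$$
where $N_H(\cdot,\cdot)$ denotes the common codegree neighbourhood. It suffices to exhibit $\phi$ with $|C_v(\phi)| \geq |V(F)|$: greedy selection of an unused vertex in $C_v(\phi)$ as $\phi(v)$ then completes a copy of $F$ in $H$.

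The decisive step is to estimate $\mathbb{E}_\phi |C_v(\phi)|$ for $\phi$ sampled uniformly from the $\Omega(n^{|V(F_0)|})$ copies of $F_0$ in $H$ produced by supersaturation from $\pi_{\co}(F_0) = 0$. Writing
$$
\mathbb{E}_\phi |C_v(\phi)| = \frac{1}{\#\text{copies of } F_0}\sum_{x \in V(H)}\#\{\phi : x \in C_v(\phi)\},
$$
the numerator counts copies in $H$ of the layered sub-$3$-graph $F_0^v := F[V(F_0) \cup \{v\}]$, which has rank $(k, 1)$. When $|L_k| \geq 2$ this rank is strictly smaller than that of $F$, so the inductive hypothesis applies to $F_0^v$ and gives $\mathbb{E}_\phi |C_v(\phi)| = \Omega(n)$. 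When $|L_k| = 1$ one has $F_0^v = F$, and a direct argument using $\delta_{\co}(H) \geq \alpha n$ and the linearity from \ref{fixtwo} is needed instead. A concentration or second-moment step, together with a union bound over $v \in L_k$, then yields a single $\phi$ for which every $|C_v(\phi)|$ is simultaneously of size $\Omega(n)$, enough to finish.

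I expect the main obstacle to be the expectation bound in the boundary case $|L_k| = 1$, where the inductive reduction does not shrink the problem. A naive intersection of $|D_v|$ codegree neighbourhoods of size $\geq \alpha n$ produces no useful bound when $|D_v|$ is large, so the codegree hypothesis has to be leveraged through the combinatorial structure of $F$. The layered conditions \ref{fixmax} and \ref{fixtwo} are tailored precisely for this: \ref{fixmax} pins the relevant intersection inside a fixed bipartite (or monopartite) block, while \ref{fixtwo} provides the linearity at the label level required for the moment-type counting estimate to yield a nonempty intersection. Carrying out this estimate cleanly, and threading the quantifier dependencies through the double induction, would be the central technical task.
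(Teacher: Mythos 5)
Your proposal does not prove the statement, and the statement itself is a conjecture that the paper does not resolve; only partial progress toward it is proved (\cref{main}, \cref{layered}, \cref{thm:C1-C2-same}). Two separate problems undermine your plan.

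First, you assert that the forward direction ``$\pi_{\co}(F)=0 \Rightarrow \pi_{\points}(F)=0$ is already \cref{main}'' and then focus entirely on the converse. But the forward direction of \cref{Conj1} requires two conclusions from $\pi_{\co}(F)=0$: that $\pi_{\points}(F)=0$ \emph{and} that $F$ is layered. \cref{main} supplies only the first. The implication $\pi_{\co}(F)=0 \Rightarrow F$ layered is precisely the open content of the conjecture; the paper's contribution in this direction is \cref{thm:C1-C2-same}, which reduces it to linear $3$-graphs but leaves it open. Your proposal never addresses it, so even if the converse portion were correct, you would have proved only \cref{layered}, not \cref{Conj1}.

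Second, the converse argument has a circularity. You set $F_0 = F - v$ for a single $v\in L_k$, sample $\phi$ uniformly over copies of $F_0$, and compute $\mathbb{E}_\phi |C_v(\phi)|$. The quantity $\sum_{x}\#\{\phi : x\in C_v(\phi)\}$ counts pairs $(\phi, x)$ such that $\phi\cup\{v\mapsto x\}$ is a homomorphic copy of $F$: since $V(F_0)\cup\{v\}=V(F)$, the graph $F_0^v := F[V(F_0)\cup\{v\}]$ is just $F$, of rank $(k,|L_k|)$, not $(k,1)$ as you claim. So the expectation estimate you want requires a supersaturation bound for $F$ itself, which is exactly what you are trying to establish. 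This is not a boundary issue confined to $|L_k|=1$; it affects every step of the induction. The paper sidesteps it by a genuinely different decomposition: it removes the \emph{entire} top layer $L_k$ to form $\widetilde{F}$ on $k-1$ layers, glues many copies of $\widetilde{F}$ (blown up on a set $[m]$ or $[2m]$) via the $S$-union and \cref{graphsum} to get a still-$(k-1)$-layer $3$-graph $F'$ with $\pi_{\points}(F')=0$, embeds $F'$ by the induction hypothesis on the number of layers, and only then attaches $L_k$. The attachment step does not use naive intersection of codegree neighbourhoods; it uses the half-bipartite/Ramsey machinery (\cref{halfbip}, \cref{hb}) together with the $(2,1)$-type characterization (\cref{strengthen}), via \cref{add21} and \cref{addtripartite}, to find a set of positions where \emph{every} required half-bipartite link graph appears with positive probability simultaneously. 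Your proposal has none of these ingredients, and without them the intersection $C_v(\phi)$ can be empty whenever $|D_v|\cdot(1-\alpha) > 1$, which is the generic case.
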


It is known that $\pi_{\points}(F)=0$ holds for any linear $3$-graph $F$~(see \cref{rodl}). 
So a special case of Conjecture \ref{Conj1} is the following.

\begin{conjecture}\label{Conj2}
    For a linear $3$-graph $F$,  $\pi_{\mathrm{co}}(F)=0$ if and only if $F$ is layered. 
\end{conjecture}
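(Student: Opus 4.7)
The ``if'' direction of \cref{Conj2} is immediate: every linear $3$-graph $F$ satisfies $\pi_{\points}(F)=0$ (see \cref{rodl}), so combining this with \cref{layered} yields $\pi_{\co}(F)=0$ whenever $F$ is a layered linear $3$-graph. The substance of the conjecture is therefore the contrapositive of the reverse implication: every non-layered linear $3$-graph $F$ satisfies $\pi_{\co}(F)>0$, and the remaining plan concentrates on this direction.

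The first step would be to reformulate non-layeredness for linear $3$-graphs in concrete combinatorial terms. Restricting first to injective layered functions $f:V(F)\to\mathbb{N}$, property (A1) is automatic from any total order, and (A3) is automatic as well, because two distinct edges of a linear $3$-graph cannot share two vertices and hence cannot share two label values under an injective $f$. Thus an injective layered function exists iff there is a linear order $\prec$ on $V(F)$ and an injection $\mu:E(F)\to V(F)$ with $\mu(e)\in e$ and $\mu(e)=\max_\prec e$ for every edge $e$; by Hall's theorem this is equivalent to requiring that the directed graph on $V(F)$ with arcs $u\to\mu(e)$ for $u\in e\setminus\{\mu(e)\}$ be acyclic for some SDR $\mu$. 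A separate short argument, using (A2) and (A3), should show that allowing non-injective $f$ does not enlarge the layered class for linear $F$: ties within a layer can be broken consistently, one layer at a time, without violating any of (A1)--(A3). Thus ``non-layered linear'' should reduce to the clean statement that no SDR of $E(F)$ yields an acyclic orientation.

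The second and main step is to construct, for each such non-layered $F$, a sequence of $F$-free $3$-graphs $H_n$ with $\delta_{\co}(H_n)\geq c(F)\cdot n$. The natural template, borrowing from both \cref{counterex} and the hypothesis of \cref{layered}, is an ordered blow-up: fix a bounded ordered vertex partition $V_1\prec V_2\prec\dots\prec V_t$ and place edges only on triples with a unique top part, governed by a local rule on triples of parts. A well-chosen rule should force every linear subhypergraph of $H_n$ to inherit an acyclic SDR aligned with the part order (and hence be layered), so that no non-layered $F$ embeds, while the local density between parts keeps $\delta_{\co}(H_n)$ linear in $n$.

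The main obstacle is precisely this construction: identifying a rule whose forbidden embeddings are exactly the non-layered linear $3$-graphs. A promising route is to isolate a minimal non-layered linear $3$-graph $F_0$ extracted from the forced cycle of step one, prove $\pi_{\co}(F_0)>0$ by an ad hoc algebraic construction tailored to that cycle structure, and then show that every non-layered linear $F$ contains such an $F_0$ as a subhypergraph, so that $\pi_{\co}(F)\geq\pi_{\co}(F_0)>0$. Failing such a universal $F_0$, one would instead argue inductively on the complexity of $F$, peeling off candidate top layers and using the obstruction cycle to find an unavoidable configuration in any high-codegree construction. Translating the cyclic obstruction coming from non-layeredness into a concrete positive-codegree construction appears to be the new hypergraph-Tur\'an input required beyond what \cref{main}, \cref{layered} and \cref{counterex} already provide.
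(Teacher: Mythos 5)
Conjecture~\ref{Conj2} is not proved in the paper: it is stated as an open conjecture. What the paper actually establishes is (i) the ``if'' direction, which follows from \cref{rodl} together with \cref{layered} exactly as you observe, and (ii) the equivalence of \cref{Conj2} with \cref{Conj1} (\cref{thm:C1-C2-same}), proved via the vertex-linearization operation and \cref{func}, \cref{linear}, \cref{lintur}. Your write-up correctly identifies both of these facts, and the remainder is, by your own admission, a roadmap rather than a proof: it culminates in an unresolved need to ``identify a rule whose forbidden embeddings are exactly the non-layered linear $3$-graphs'' and explicitly flags that translating non-layeredness into a positive-codegree construction ``appears to be the new hypergraph-Tur\'an input required.'' That gap is real; the paper itself remarks in Section~4 that, unlike for $\pi$ and $\pi_{\points}$, there seems to be no single explicit construction that rules out all non-layered hypergraphs, and it only exhibits an ad hoc $12$-part construction for one particular non-layered $F=F_1\cup F_2$ with $\pi_{\co}(F)\geq 1/12$. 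So your proposal does not prove the conjecture and does not match any proof in the paper, because no such proof exists.

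Two further remarks on your first reduction step. The claim that non-injective layered functions of a linear $F$ can always be upgraded to injective ones ``one layer at a time'' is asserted but not proved; the sticking point is \ref{fixmax}, which is a global constraint on all edges whose top vertex lies in a fixed layer, so it is not obvious that arbitrary tie-breaking preserves it. (\cref{func}, which the paper proves, works with semi-layered functions and is a different statement.) Also, the invocation of Hall's theorem is a non sequitur: Hall characterizes when \emph{some} SDR exists, whereas you need an SDR whose induced digraph is acyclic, a strictly stronger demand. The resulting ``acyclic-SDR'' characterization of layered linear $3$-graphs may well be correct, but as written these steps need filling in, and even granting them the main construction remains an open problem.
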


We show that this seemingly weaker conjecture is in fact equivalent to~\cref{Conj1}, which suggests that the linear $3$-graph case might be crucial for resolving Conjecture \ref{Conj1}.

\begin{theorem}\label{thm:C1-C2-same}
      \cref{Conj1} and \cref{Conj2} are equivalent.
\end{theorem}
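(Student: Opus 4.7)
The implication \cref{Conj1} $\Rightarrow$ \cref{Conj2} is immediate: \cref{rodl} gives $\pi_{\points}(F)=0$ for every linear $3$-graph $F$, so restricting \cref{Conj1} to linear $3$-graphs recovers exactly \cref{Conj2}.

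For the converse, assume \cref{Conj2}. Combining \cref{main} (which gives $\pi_{\co}(F)=0\Rightarrow\pi_{\points}(F)=0$) with \cref{layered} (which gives the converse whenever $F$ is layered), the only content of \cref{Conj1} left to establish is the implication: every $3$-graph $F$ with $\pi_{\co}(F)=0$ is layered. Equivalently, every non-layered $3$-graph $F$ satisfies $\pi_{\co}(F)>0$. Given such a non-layered $F$, the plan is to produce a \emph{linear} non-layered $3$-graph $F^{\ast}$ with $\pi_{\co}(F^{\ast})\leq\pi_{\co}(F)$; applying \cref{Conj2} to $F^{\ast}$ then yields $\pi_{\co}(F^{\ast})>0$, and hence $\pi_{\co}(F)>0$.

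To construct $F^{\ast}$, we pass to a blow-up $F[t]$ for sufficiently large $t$. For each edge $e\in E(F)$ we select several ``representative'' copies of $e$ inside $F[t]$ so that (i) the collection of selected edges forms a linear sub-hypergraph $F^{\ast}\subseteq F[t]$, and (ii) the selected edges are rigid enough that any layered labeling of $F^{\ast}$ is forced, through repeated use of condition~\ref{fixtwo}, to assign the same label to all copies of any given vertex of $F$. Under (ii), a layering of $F^{\ast}$ would descend by projection to a layering of $F$, contradicting the hypothesis on $F$; hence $F^{\ast}$ itself is non-layered. A standard supersaturation-type bound yields $\pi_{\co}(F[t])\leq\pi_{\co}(F)$, and together with $F^{\ast}\subseteq F[t]$ this gives $\pi_{\co}(F^{\ast})\leq\pi_{\co}(F)$, completing the argument.

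The main obstacle is the simultaneous enforcement of (i) and (ii): linearity requires that representatives of edges sharing a pair in $F$ differ sufficiently on that pair, while rigidity requires enough overlap between representatives to propagate label equalities across all copies of each vertex of $F$. Striking this balance requires a careful combinatorial design of the family of representatives inside $F[t]$, which I expect to be the technically most delicate step; once such a design is exhibited, the remaining ingredients (supersaturation for $\pi_{\co}$ under blow-ups, monotonicity of $\pi_{\co}$ under sub-hypergraphs, and the assumed \cref{Conj2}) combine in the routine fashion indicated above.
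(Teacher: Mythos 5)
Your reduction to the linear case, the easy direction via \cref{rodl}, and the observation that (by \cref{main} and \cref{layered}) the only content of \cref{Conj1} left to prove under \cref{Conj2} is ``$\pi_{\co}(F)=0$ implies $F$ layered'' are all correct, and they match the logical scaffolding of the paper's argument exactly. The difference is entirely in the construction of the linear $3$-graph to which \cref{Conj2} is applied, and this is where your proposal has a genuine gap that I believe cannot be repaired within the blow-up framework you describe.

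First, the construction is not exhibited: you acknowledge that the ``careful combinatorial design'' of representatives is the delicate step and defer it, so as written there is no proof. Second, and more seriously, there is reason to doubt that such a design is even possible. Every sub-hypergraph $F^{\ast}\subseteq F[t]$ inherits a $|V(F)|$-partition in which every edge is rainbow (one vertex per class), and this is a substantive restriction. For instance if $F=K_4^{(3)}$ (which is non-layered), any $F^{\ast}\subseteq F[t]$ is rainbow $4$-colourable; but the Fano plane is not, and I could not locate a non-layered linear $3$-graph admitting a rainbow $4$-colouring, so it is unclear that any admissible $F^{\ast}$ exists. There is also a bootstrapping problem with your rigidity condition (ii): for a \emph{linear} $F^{\ast}$, no two edges share a pair of vertices, so \ref{fixtwo} can only propagate a label equality $f(w)=f(w')$ once one already knows $f(u)=f(u'),\,f(v)=f(v')$ for vertices in distinct classes of the intended partition; there is no ``seed'' equality to start the chain, and indeed one checks that for a linear $3$-graph an \emph{injective} labelling makes \ref{fixtwo} vacuous, so the only real constraint is \ref{fixmax}, which says the vertices can be totally ordered so that the map sending each edge to its top vertex is injective. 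Forcing this to fail is a strong demand that your representatives inside $F[t]$ would have to meet, and nothing in the blow-up alone supplies it. The paper sidesteps all of this by building the linear $3$-graph $\mathcal{L}(F)$ \emph{outside} any blow-up of $F$: it repeatedly replaces a vertex $v$ by new vertices $v_{uw}$ (one per pair in $L_F(v)$) and attaches to each a Fano-plane-minus-one-edge gadget on fresh vertices. The gadget is exactly the rigidity device your proposal is missing: because the Fano plane minus an edge has an essentially unique layered function, it forces $g(v_{uw})$ to be constant over all $uw\in L_F(v)$ (via \ref{fixmax}), which is what allows a layered function of $\mathcal{L}(F)$ to be pushed back down to one of $F$ (\cref{linear}). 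Combined with \cref{lintur} ($\pi_{\co}(F_v)\leq\pi_{\co}(F)$, proved by embedding the gadget via \cref{add21} into a blow-up of $v$ inside a dense host) and \cref{func} (minimum semi-layered functions are layered), this gives the full argument. So the conclusion of your plan is correct but the central construction is missing, and your proposed route for obtaining it appears to be blocked; you should instead introduce an explicit linearization gadget along the lines of $\mathbf{F}_{uw}$.
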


We now present some applications of~\cref{layered}. The first one characterizes $3$-graphs in a special family with vanishing codegree Tur\'an density. A layered 3-graph $F$ on two layers is said to be a \emph{$(2,1)$-type $3$-graph}, that is, $V(F)$ can be partitioned into two parts such that each edge of $F$ has two vertices in one part (i.e. the first layer) and one vertex in the other part (i.e. the second layer). As the simplest layered $3$-graphs, $(2,1)$-type $3$-graphs play a pivotal role in both of our main results~\cref{main} and~\cref{layered}. We need some definitions. Given a vertex $u$ in a 3-graph $F$, its \emph{link graph}, denoted by $L_F(u)$, is the graph with $V(L_F(u))=V(F)\setminus\{u\}$ and $E(L_F(u))=\{vw : uvw\in E(F)\}$. Let $S$ be a finite set, we say that $\sigma$ is a \emph{labeling} of $S$ if $\sigma:S\rightarrow [|S|]$ is a bijection. Let $G$ be a graph and $\sigma$ be a labeling of $V(G)$. Let $u,v,w\in V(G)$ and $uvw$ form a path of length two in $G$. We say that $uvw$ is a \emph{monotone $P_3$} if $\sigma(u)<\sigma(v)<\sigma(w)$ or $\sigma(u)>\sigma(v)>\sigma(w)$.

It is well known that for a $3$-graph $F$, $\pi(F)=0$ if and only if $F$ is tripartite. Note that a tripartite $3$-graph is of $(2,1)$-type. Together with \cref{strengthen}, \cref{layered} implies the following characterization of $(2,1)$-type $3$-graphs $F$ with $\pi_{\co}(F)=0$.

\begin{cor}\label{twoone}
For a $(2,1)$-type $3$-graph $F$, $\pi_{\co}(F)=0$ if and only if there is a labeling of the first layer of $F$ such that $L_F(v)$ contains no monotone $P_3$ for every vertex $v$ in the second layer.
\end{cor}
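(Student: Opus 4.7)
The plan is to assemble the corollary as a direct combination of \cref{layered} with the cited \cref{strengthen}. Since every $(2,1)$-type $3$-graph is, by definition, a layered $3$-graph on two layers (the layered function $f$ assigns label $1$ to the vertices of the first layer and label $2$ to those of the second layer, so every edge has label multiset $\{1,1,2\}$ and conditions \ref{uniquemax}, \ref{fixmax}, and \ref{fixtwo} are immediate), \cref{layered} applies and yields the equivalence $\pi_{\co}(F)=0$ if and only if $\pi_{\points}(F)=0$.

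Next I would invoke \cref{strengthen}, which (as signalled by its use in the paragraph preceding the corollary) characterizes vanishing uniform Tur\'{a}n density for $(2,1)$-type $3$-graphs by exactly the labeling condition stated in the corollary: namely, the existence of a labeling of the first layer such that $L_F(v)$ contains no monotone $P_3$ for every vertex $v$ in the second layer. Chaining this with the previous equivalence yields \cref{twoone} directly.

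The substantive work therefore lies in \cref{strengthen} rather than in the corollary itself, which is essentially a bookkeeping combination. A key observation on which \cref{strengthen} should rest is that for a graph equipped with a bijective vertex labeling $\sigma$, the absence of a monotone $P_3$ is equivalent to bipartiteness with one class consisting of vertices whose neighbours all have larger $\sigma$-labels and the other of vertices whose neighbours all have smaller $\sigma$-labels. This reinterpretation of the labeling condition on each link graph $L_F(v)$ for $v$ in the second layer should form the bridge to the Reiher--R\"{o}dl--Schacht characterization of $3$-graphs with $\pi_{\points}(F)=0$, which is the natural route to establishing \cref{strengthen} and is where the real obstacle in proving the corollary from scratch would lie.
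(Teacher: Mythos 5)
Your proposal matches the paper's argument exactly: the corollary is obtained by observing that a $(2,1)$-type $3$-graph is layered (with the obvious two-valued layered function), invoking Theorem~\ref{layered} to reduce $\pi_{\co}(F)=0$ to $\pi_{\points}(F)=0$, and then applying Lemma~\ref{strengthen} to translate that into the stated labeling condition on the link graphs. Your further remarks about how Lemma~\ref{strengthen} would itself be proved (via the half-bipartite reinterpretation and the Reiher--R\"odl--Schacht characterization) are accurate as context but not needed, since that lemma is cited from the prior work.
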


Note that if a graph has no monotone $P_3$ under a labeling, then the graph must be bipartite. So for a $(2,1)$-type $3$-graph $F$ with $\pi_{\co}(F)=0$, the link graphs
$L_F(v)$ are bipartite for all vertices $v$ in the second layer (the bipartitions could be different). Thus,~\cref{twoone} implies that every linear $(2,1)$-type $3$-graph $F$, e.g. Fano plane with one edge removed, satisfies $\pi_{\co}(F)=0$. 

In the next application, we use~\cref{layered} and~\cref{twoone} to  recover the results for $C_{\ell}^{(3)-}$ and $Z^{(3)-}_{r}$ in~\cite{piga2022codegree,piga2023hypergraphs}.

\begin{cor}
For $\ell\geq 5$ and $r\geq 3$, $\pi_{\co}(C_{\ell}^{(3)-})=\pi_{\co}(Z^{(3)-}_{r})=0$.
\end{cor}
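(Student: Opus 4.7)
The plan is to apply \cref{layered}, which reduces the corollary to showing, for each of $F = C_\ell^{(3)-}$ and $F = Z_r^{(3)-}$, that (a) $F$ is layered and (b) $\pi_{\points}(F)=0$. Part (b) for these two families can be read off from the known characterization of $3$-graphs with vanishing uniform Tur\'an density by Reiher, R\"odl and Schacht~\cite{reiher2018hypergraphs}: in both cases one exhibits an ordering of the vertex set together with an edge-coloring of the shadow whose required compatibility condition is easy to verify from the explicit cyclic/zycle structure. So the main work is to produce explicit layered functions for both families.

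For $C_\ell^{(3)-}$, I would first observe that A3 is very restrictive on the tight cycle: two consecutive edges $v_iv_{i+1}v_{i+2}$ and $v_{i+1}v_{i+2}v_{i+3}$ share two vertices, which forces $f(v_i)=f(v_{i+3})$. Iterating gives a period-$3$ constraint on $f$, and the additional wraparound edge provides one more relation. When $\ell\equiv 0\pmod 3$, the labeling $f(v_i)\equiv i\pmod 3$ makes every edge rainbow (multiset $\{0,1,2\}$), so A1--A3 hold automatically. When $\ell\not\equiv 0\pmod 3$, the extra relation collapses two of the three residue classes into one; I use only two labels $a<b$, arranged so that every remaining edge has multiset $\{a,a,b\}$. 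Since all edges then share a common multiset with a unique maximum, A1--A3 are immediate.

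For $Z_r^{(3)-}$, I would remove the edge $u_rv_rv_1$ (by symmetry any choice works). The pair of edges $u_iv_iu_{i+1}$ and $u_iv_iv_{i+1}$ share the two vertices $u_i,v_i$, so A3 forces $f(u_{i+1})=f(v_{i+1})$ whenever both edges survive, i.e.\ for $i=1,\dots,r-1$. Accordingly I would set $f(u_i)=f(v_i)=i$ for $i\in\{2,\dots,r\}$ and assign $f(u_1)=r+1$, $f(v_1)=r+2$. Then each pair $\{u_iv_iu_{i+1},u_iv_iv_{i+1}\}$ for $i\in\{2,\dots,r-1\}$ has the common multiset $\{i,i,i{+}1\}$, the pair at $i=1$ has the common multiset $\{2,r{+}1,r{+}2\}$, and the lone wraparound $u_rv_ru_1$ has multiset $\{r,r,r{+}1\}$. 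A1 is clear from the strict ordering of maxima, A2 is built in by construction since edges with the same max share the same multiset, and A3 is easily checked because any two edges with distinct multisets share at most one common label.

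The main obstacle is verifying the layered structure of $C_\ell^{(3)-}$: the periodicity induced by A3 leaves essentially no room to manoeuvre, and the choice of which edge to delete from the cycle must be coordinated with $\ell\pmod 3$ so that the cycle closure relation can be absorbed into the appropriate class merge. The zycle case is less delicate because, after fixing $f(u_i)=f(v_i)$ for $i\ge 2$, the two edges at each rung automatically have equal label multisets, and the wraparound is handled cleanly by giving $u_1,v_1$ two exceptional labels larger than everything else.
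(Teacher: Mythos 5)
Your overall strategy matches the paper's: apply \cref{layered}, which reduces the corollary to checking that each $F$ is layered and has $\pi_{\points}(F)=0$. Your layered function for $Z_r^{(3)-}$ is, up to an additive shift, identical to the one given in the paper, and your periodicity analysis of constraint~\ref{fixtwo} on $C_\ell^{(3)-}$ is correct: the consecutive tight edges force $f$ to be $3$-periodic along the cycle, and the extra relation coming from the near-wraparound edge either keeps three residue classes (when $3\mid\ell$) or merges two of them (when $3\nmid\ell$), producing a valid two-label layered function in the latter case with every edge carrying label multiset $\{a,a,b\}$.

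Where you diverge from the paper is on $C_\ell^{(3)-}$: you construct a layered function directly for every $\ell$, whereas the paper first invokes the blow-up reduction --- $C_\ell^{(3)-}$ embeds in a blow-up of $C_5^{(3)-}$ and $\pi_{\co}$ is blow-up invariant --- so only the five-vertex case $C_5^{(3)-}$ needs checking, and then it applies \cref{twoone}, which packages both the layered condition and the $\pi_{\points}=0$ condition into the single monotone-$P_3$ criterion for $(2,1)$-type $3$-graphs. This is considerably lighter than what you do. Your approach buys you a uniform statement for all $\ell$ without appealing to blow-ups, but at the cost of the periodicity case analysis you carried out.

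The one genuine gap in your proposal is that you never actually establish part~(b), namely $\pi_{\points}(C_\ell^{(3)-})=0$; you assert it can be ``read off'' from \cref{rodl}, but the straightforward cyclic ordering does \emph{not} yield a valid coloring (consecutive edges of the tight cycle force the shared pair $v_{i+1}v_{i+2}$ to be simultaneously green and red), so an explicit $\sigma$ must be exhibited and is not entirely obvious. Note that with the layered structure in hand, the case $3\mid\ell$ is in fact automatic: every edge then spans three distinct layers, so there are no linked pairs and \cref{linked} gives $\pi_{\points}=0$ vacuously. But for $3\nmid\ell$ your construction yields a two-layer, $(2,1)$-type structure, and by \cref{strengthen} you still owe a labeling of the first layer in which no second-layer vertex has a monotone $P_3$ in its link graph --- i.e.\ a zigzag labeling that works simultaneously for all the length-$3$ paths arising as link graphs. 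This is doable, but it is exactly the verification you skip. Invoking the blow-up reduction (which preserves both $\pi_{\co}$ and $\pi_{\points}$) and checking only $C_5^{(3)-}$, as the paper does, is the cleanest way to close this gap.
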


\begin{proof}
As indicated in \cite{piga2022codegree}, every $C_{\ell}^{(3-)}$ with $\ell\geq 5$ is contained in a blow-up of $C_5^{(3)-}$, so we only need to show that $\pi_{\co}(C_{5}^{(3)-})=0$ as any hypergraph and its blow-up have the same codegree Tur\'{a}n density (see e.g.~\cite{mubayi2007co}). Now one can easily check that $C_5^{(3)-}$ is a $(2,1)$-type $3$-graph satisfying the condition in \cref{twoone}.

For $Z^{(3)-}_{r}$ with $r\geq 3$, suppose  
 $$V\left(Z^{(3)-}_{r}\right)=\cup_{i=1}^{r}\{v_i,u_i\} \text{~and~} E\left(Z^{(3)-}_{r}\right)=\left(\cup_{i=1}^{r-1}\{u_iv_iu_{i+1},u_iv_iv_{i+1}\}\right)\cup\{u_{r}v_{r}u_{1}\}.$$ 
Let $\sigma(u_i)=2i-1$ and $\sigma(v_i)=2i$ for $1\leq i\leq r$,
and define
 \begin{equation}
 f(u)=
\begin{cases}
r,    &  \text{if $u=u_1$;}\\
r+1,          &  \text{if $u=v_1$;}\\
i-1,    &  \text{if $u\in\{u_i,v_i\}$ and $2\leq i\leq r$.}\nonumber
\end{cases}   
\end{equation}
It is not hard to verify that $\sigma$ is a labeling satisfying \ref{coloring} in \cref{rodl} and $f$ is a layered function. Therefore, $\pi_{\co}(Z^{(3)-}_{r})=0$ follows from \cref{layered}.
\end{proof}

\noindent{\bf Our approach.~}
To prove~\cref{counterex}, we utilize the so-called tensor product and the fact that the product is contained in large blow-ups of any component. We then observe that for any $3$-graph $F$ with minimum codegree at least two, there is a supersaturation phenomenon for $F$ in $3$-graphs with linear minimum codegree. On the other hand, such $F$ has positive uniform Tur\'an density. We can then take tensor product of such $F$ to obtain counterexamples.

To prove Theorem \ref{layered}, we first note a natural connection between certain half-bipartite graphs and the vanishing condition in \cref{permutation}, as well as a connection between graph distributions and linear codegree condition. Then applications of Ramsey's theorem show that in dense graph distributions any half-bipartite graph on a fixed vertex set will appear with a positive probability (Lemmas \ref{halfbip} and \ref{hb}). This enables us to embed any layered $3$-graph on two layers and with vanishing uniform Tur\'{a}n density into a $(2,1)$-type 3-graph, assuming that any pair in one part of the $(2,1)$-type 3-graph has a positive degree in the other part (Lemma \ref{add21}), which is one of the key steps in our proof. Since the $(2,1)$-type vanishing condition (\cref{strengthen}) is well compatible with the layered structure (Lemmas \ref{linked} and \ref{graphsum}), gluing in a correct way several copies of the $3$-graph obtained by removing from $F$ all vertices on the highest layer, we can inductively embed layer by layer any layered $3$-graph $F$ with $\pi_{\points}(F)=0$ into any $3$-graph with a positive codegree condition using \cref{addtripartite} and \cref{add21}.

\medskip

\noindent{\bf Notations.~}Let $F$ be a 3-graph. 
The \emph{shadow graph} of $F$, denoted by $\partial F$, is the graph with $V(\partial F)=V(F)$ and $E(\partial F)=\{uw : uvw\in E(F)\}$. Let $u,v$ be two vertices of the $3$-graph $F$. We say that $w\in V(F)$ is a \emph{coneighbor} of $u,v$ if $uvw\in E(F)$. The \emph{coneighbor set} of $u,v$ is defined as $N_{F}(uv)=\{w : uvw\in E(F)\}$. For a (hyper)graph $G$ and a set of vertices $W\subseteq V(G)$, denote by $G-W$ the (hyper)graph obtained from deleting vertices in $W$ from $G$.

The following operation provides us a natural way to merge several labelings into a larger one. Let $S_1,S_2$ be two disjoint finite sets and let $\sigma_1,\sigma_2$ be two labelings of $S_1,S_2$ respectively. The sum of $\sigma_1$ and $\sigma_2$, denoted by $\sigma_1 \oplus \sigma_2$, is a labeling of $S_1\cup S_2$ where
\[\sigma_1 \oplus \sigma_2(s)= \left \{
\begin{array}{ll}
 \sigma_1(s),  & s\in S_1;\\
 \sigma_2(s)+|S_1|, & s\in S_2.
\end{array}
\right.\]
For more than two labelings $\sigma_1,\sigma_2,\ldots,\sigma_k$, the sum of them, denoted by $\sum_{i=1}^{k}\sigma_i$, is inductively defined by 
\[\sum_{i=1}^{k}\sigma_i=\left(\sum_{i=1}^{k-1}\sigma_i\right)\oplus \sigma_k.\]
Let $S$ be a finite set and $\sigma':S\rightarrow \mathbb{Z}$ be an injection. We say that a labeling $\sigma$ of $S$ is \emph{induced by} $\sigma'$ if for every $s_1,s_2\in S$, $\sigma(s_1)>\sigma(s_2)$ if and only if $\sigma'(s_1)>\sigma'(s_2)$. Obviously, $\sigma$ exists and is unique.

\section{A counterexample to Question~\ref{ques}}

In this section, we prove Theorem~\ref{counterex} to show the existence of $3$-graphs $F$ with $\pi_{\points}(F)>\pi_{\mathrm{co}}(F)$. We start by constructing the $3$-graph $F$, then show that $\pi_{\mathrm{co}}(F)\leq\varepsilon$, and finally that $\pi_{\points}(F)\geq 4/27$.

\begin{proof}[Proof of Theorem~\ref{counterex}]

For each integer $k\geq 4$, let $\mathcal{F}_k$ be the family of all $k$-vertex $3$-graphs with minimum codegree at least two. Denote the elements of $\mathcal{F}_k$ as $\{F_1, F_2, \dots, F_\ell\}$. We define the $3$-graph $\widetilde{F}_k$ with vertex set $V(F_1)\times V(F_2)\times\dots\times V(F_\ell)$, and a triple of vertices $u,v,w\in V(F)$ form an edge if for every $1\leq i\leq \ell$, the $i$-th coordinates of the three vertices form an edge of $F_i$. This is sometimes referred to as the \textit{tensor product} of the elements of $\mathcal{F}_k$. A simple but useful fact about $\widetilde{F}_k$ we shall use is that $\widetilde{F}_k$ is contained as a subgraph in a $k^{{\ell-1}}$-blowup of any $F_i\in\mathcal{F}_k$ where each vertex is replaced by $k^{{\ell-1}}$ copies.  

\begin{claim}
    For every $\varepsilon>0$, there exists $k$ such that $\pi_{\mathrm{co}}(\widetilde{F}_k)\leq\varepsilon$.
\end{claim}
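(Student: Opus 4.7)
The plan is to show that, for $k=k(\varepsilon)$ chosen sufficiently large, every sufficiently large $3$-graph $H$ with $\delta_{\mathrm{co}}(H)\geq\varepsilon n$ contains $\widetilde{F}_k$. The enabling fact already noted in the excerpt is that $\widetilde{F}_k$ is a subhypergraph of the $k^{\ell-1}$-blowup of every $F_i\in\mathcal{F}_k$, via the natural projection onto the $i$-th coordinate; so it suffices to produce, inside $H$, a $k^{\ell-1}$-blowup of some element of $\mathcal{F}_k$.

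To locate an induced copy of some $F\in\mathcal{F}_k$ inside $H$, I would use a random-subset argument. Sample $S$ uniformly among $k$-subsets of $V(H)$. For any pair $u,v\in V(H)$, conditional on $\{u,v\}\subseteq S$ the number of coneighbors of $u,v$ in $S\setminus\{u,v\}$ is hypergeometric with mean at least $\varepsilon(k-2)$, since $d_H(uv)\geq \varepsilon n$. A Chernoff-type tail bound then shows that this count is at most $1$ with probability at most $e^{-c\varepsilon k}$ for some absolute $c>0$. Summing over the $\binom{k}{2}$ pairs that sit inside $S$, the expected number of pairs $\{u,v\}\subseteq S$ with fewer than $2$ coneighbors in $S$ is at most $\binom{k}{2}\,e^{-c\varepsilon k}$, which is $o(1)$ once $k$ is taken large enough in terms of $\varepsilon$. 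Hence almost every $S$ satisfies $\delta_{\mathrm{co}}(H[S])\geq 2$, i.e.\ $H[S]\in\mathcal{F}_k$. Since $|\mathcal{F}_k|$ depends only on $k$, pigeonhole produces a single $F^{*}\in\mathcal{F}_k$ that arises as $H[S]$ for $\Omega(n^k)$ choices of $S$, so $H$ contains $\Omega(n^k)$ copies of $F^{*}$.

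A standard supersaturation argument then promotes these copies of $F^{*}$ to an arbitrarily large blowup of $F^{*}$ inside $H$: one iteratively fixes the image of all but one vertex of $F^{*}$ and pigeonholes over the remaining free vertex to clone it. In particular, the $k^{\ell-1}$-blowup of $F^{*}$ sits inside $H$, and combined with the tensor-product embedding recorded above this yields $\widetilde{F}_k\subseteq H$, as required.

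The main technical point is the calibration of $k$ in the sampling step, where the per-pair Chernoff estimate must beat the $O(k^{2})$ union bound; this forces roughly $k\gtrsim C/\varepsilon$. The remaining pieces---supersaturation for a fixed $3$-graph under linear codegree, and the coordinate-projection embedding of the tensor product into a blowup---are essentially routine.
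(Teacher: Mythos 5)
Your proposal matches the paper's proof in essentially every structural step: sample a random $k$-set $S$, show that with constant (or high) probability $H[S]$ has minimum codegree at least $2$, pigeonhole over the finitely many elements of $\mathcal{F}_k$ to find a single $F^*$ realized $\Omega(n^k)$ times, apply supersaturation to get a $k^{\ell-1}$-blowup of $F^*$, and conclude via the observation that $\widetilde{F}_k$ embeds in such a blowup. The only cosmetic difference is in the per-pair estimate: you invoke a Chernoff-type tail bound for the hypergeometric count of coneighbors inside $S$, whereas the paper uses a direct elementary count, bounding the probability that $d_{H[S]}(u,v)\le 1$ by $(k-2)(1-\varepsilon)^{k-3}$ (choose which single vertex of $S\setminus\{u,v\}$, if any, is allowed to be a coneighbor, and require the remaining $k-3$ to miss $N_H(uv)$). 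Both yield the same exponential decay in $\varepsilon k$, so the two calculations are interchangeable; the paper's version just avoids appealing to a named concentration inequality.
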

\begin{poc}
We choose parameters satisfying $1/n\ll 1/k \ll \varepsilon$. Let $H$ be an $n$-vertex $3$-graph with $\delta_{\mathrm{co}}(H)\geq\varepsilon n$. Let $S$ be a set of $k$ vertices from $H$, sampled uniformly at random. Given a pair of vertices $u,v\in S$, the probability that their codegree is at most 1 can be upper-bounded by $(k-2)(1-\varepsilon)^{k-3}$, since at least $k-3$ of the remaining vertices of $S$ must be excluded from $N_H(uv)$, and there are $k-2$ ways of selecting the potential neighbor. Therefore, the probability that $\delta_{\mathrm{co}}(H[S])\leq 1$ is at most ${k \choose 2}(k-2)(1-\varepsilon)^{k-3}$, which is less than $1/2$ for $k$ large enough.

Therefore, there are ${n \choose k}/2$ ways of selecting $S$ so that $H[S]$ is an element of $\mathcal{F}_k$. So by the pigeonhole principle there exists $i$ such that there are at least ${n \choose k}/(2|\mathcal{F}_k|)$ choices of $S$ such that $H[S]=F_i$. For $n$ large enough, $H$ contains a $k^{{\ell-1}}$-blowup of $F_i$ which contains $\widetilde{F}_k$ as a subgraph, so we conclude that $\pi_{\mathrm{co}}(\widetilde{F}_k)\leq\varepsilon$.  
\end{poc}

\begin{claim}\label{427}
    For each $k\geq 4$, we have $\pi_{\points}(\widetilde{F}_k)\geq 4/27$.
\end{claim}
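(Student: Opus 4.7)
The plan is to exhibit, for every $\eta>0$ and every sufficiently large $n$, an $n$-vertex $\widetilde F_k$-free $3$-graph $H$ that is uniformly $(4/27-\eta,\eta)$-dense. My approach is to find a $3$-graph $F^*\subseteq \widetilde F_k$ with $\pi_{\points}(F^*)\geq 4/27$; by the monotonicity of the uniform Tur\'an density under subgraph containment, any $F^*$-free uniformly $(4/27-\eta,\eta)$-dense $3$-graph is then automatically $\widetilde F_k$-free, yielding $\pi_{\points}(\widetilde F_k)\geq \pi_{\points}(F^*)\geq 4/27$. The task reduces to identifying such an $F^*$.

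My primary candidate is $F^* = K_4^{(3)-}$, which satisfies $\pi_{\points}(K_4^{(3)-})=1/4 > 4/27$ by the results of Glebov--Kr\'al'--Volec~\cite{K43-dan} and Reiher--R\"odl--Schacht~\cite{K43-rodl}. Using the tensor-product structure of $\widetilde F_k$, an embedding $K_4^{(3)-}\hookrightarrow \widetilde F_k$ is specified by a tuple of maps $K_4^{(3)-}\to F_i$ (one per $F_i\in\mathcal{F}_k$) that send edges to edges, with the combined coordinate-wise map being injective on vertices. A single such map into $F_i$ exists precisely when $F_i$ has some vertex whose link graph contains a triangle. For $k=4$ this is immediate since $\mathcal{F}_4=\{K_4^{(3)}\}$; for $k=5$, a short counting argument (based on the fact that three pair-disjoint $3$-edges cannot fit on $5$ vertices) shows that every $F_i\in\mathcal{F}_5$ in fact contains a copy of $K_4^{(3)}$, so each of its link graphs contains a triangle.

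For $k\geq 6$ the situation becomes delicate: some $F_i\in\mathcal{F}_k$ can be $2$-designs whose link graphs are all triangle-free cycles of length $k-1$, so $K_4^{(3)-}$ may fail to embed into such $F_i$, and the simple choice $F^* = K_4^{(3)-}$ breaks down. Handling this regime is the main obstacle I foresee. A natural resolution is to replace $F^*$ by a more flexible universal $3$-graph, for instance a suitable blow-up $K_4^{(3)-}(t)$: its embedding into $\widetilde F_k$ can be attempted coordinate-by-coordinate by greedy exploitation of the minimum codegree $\geq 2$ condition in each $F_i$, while $\pi_{\points}(F^*)=1/4>4/27$ is preserved via blow-up invariance of the uniform Tur\'an density (which follows from the standard supersaturation argument for uniformly dense host $3$-graphs). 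The remaining technical challenge would be to verify rigorously that this construction produces compatible embeddings into every $F_i\in\mathcal{F}_k$, including the extremal $2$-design configurations.
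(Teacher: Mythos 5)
The reduction you propose---find $F^*\subseteq\widetilde F_k$ with $\pi_{\points}(F^*)\geq 4/27$ and invoke monotonicity of $\pi_{\points}$---is sound in principle, but your candidate $F^*=K_4^{(3)-}$ does not embed in $\widetilde F_k$ for general $k$, and your proposed fix cannot repair this. The problem is exactly the one you flag for $k\geq 6$: there exist $K_4^{(3)-}$-free $3$-graphs on $k$ vertices with minimum codegree $\geq 2$. For instance, on vertex set $\{0,1,2,3,4,\infty\}$ take the ten edges $\infty01,\infty12,\infty23,\infty34,\infty04,013,023,024,124,134$; every link graph is a $5$-cycle, hence triangle-free, so there is no copy of $K_4^{(3)-}$, yet every pair has codegree exactly $2$. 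Since any hypergraph homomorphism of $K_4^{(3)-}$ is automatically injective, the tensor product $\widetilde F_k$ contains $K_4^{(3)-}$ only if \emph{every} factor $F_i$ does; once a single $F_i\in\mathcal F_k$ is $K_4^{(3)-}$-free (as above), $\widetilde F_k$ is $K_4^{(3)-}$-free. The blow-up repair you suggest is a non-starter: $K_4^{(3)-}\subseteq K_4^{(3)-}(t)$, so $\widetilde F_k$ being $K_4^{(3)-}$-free already forces it to be $K_4^{(3)-}(t)$-free for every $t$; no amount of greedy coordinate-wise embedding can circumvent this. The paper's proof avoids this trap entirely by working with the construction directly: it randomly colors $K_n$ red (probability $2/3$) and blue (probability $1/3$), takes $H$ to be the triples $r<s<t$ with $rs,rt$ red and $st$ blue, and shows $\widetilde F_k\not\subseteq H$ via an argument special to $\widetilde F_k$---namely that pairs of codegree $\geq 2$ in $\widetilde F_k$ are precisely the ``coordinate-wise disjoint'' pairs, and a minimality argument on such pairs yields a coloring contradiction on a shared pair $vw$. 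To salvage your route you would need to identify and verify a different $F^*\subseteq\widetilde F_k$ with $\pi_{\points}(F^*)\geq 4/27$, and it is not clear that a clean such choice exists.
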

\begin{poc}
Let $G$ be a complete ($2$-)graph on the vertex set $[n]$, where the edges are randomly colored red with probability $2/3$ and blue with probability $1/3$. Then construct the $3$-graph $H$ on the same vertex set $[n]$ by placing an edge on the triples $r<s<t$ if $rs$ and $rt$ are red and $st$ is blue. By a standard probabilistic argument, one can show that for any $\eta>0$, with high probability, $H$ is uniformly $(4/27, \eta)$-dense. We will show that $H$ does not contain a copy of $\widetilde{F}_k$.

Suppose on the contrary that $\widetilde{F}_k\subseteq H$. Then by the construction of $H$, there is a labeling $\sigma$ of $V(\widetilde{F}_k)$ and an edge coloring of $\partial\widetilde{F}_k$ such that for any $uvw\in E(\widetilde{F}_k)$ with $\sigma(u)<\sigma(v)<\sigma(w)$, $uv$ and $uw$ are red and $vw$ is blue. We say that two vertices $u, v\in V(\widetilde{F}_k)$ are \textit{disjoint} if for all $1\leq i\leq \ell$, the $i$-th coordinates of $u$ and $v$ are distinct. Note that by the construction of $\widetilde{F}_k$, a pair of vertices are contained in at least two edges if and only if they are disjoint.  

Let $v\in V(\widetilde{F}_k)$ be the vertex with minimum $\sigma(v)$ such that there exists a vertex $u$ with $\sigma(u)<\sigma(v)$ which is disjoint from $v$. Therefore, there is a vertex $w$ with $uvw\in E(\widetilde{F}_k)$ (and then $v$ and $w$ are also disjoint). Further, a vertex $u'$, different from $u$, with $u'vw\in E(\widetilde{F}_k)$ exists.
Since $u,u',v,w$ are pairwise contained in some edge of $\widetilde{F}_k$, they are pairwise disjoint. Then, by the minimality of $v$ we have $\sigma(u)<\sigma(v)<\sigma(u'),\sigma(w)$. But then we reach a contradiction that the pair $vw$ is blue in the edge $uvw$ while it is red in the edge $u'vw$.   
\end{poc}

\cref{counterex} follows immediately from the above two claims.
\end{proof}

\section{Layered $3$-graphs}
In this section, we prove Theorem \ref{layered}, that is, $\pi_{\mathrm{co}}(F)=0$ is equivalent to $\pi_{\points}(F)=0$ for every layered $3$-graph $F$. In \cref{sec:uni}, we first introduce several characterizations of $3$-graphs with vanishing uniform Tur\'{a}n density. Then we introduce in~\cref{sec:hb} the notions of half-bipartite graphs and dense graph distributions and prove a lemma (\cref{hb}) which acts as a bridge connecting $\pi_{\points}$ and $\pi_{\mathrm{co}}$. Then in~\cref{sec:layer-vanishing}, we characterize layered 3-graphs with vanishing uniform Tur\'an density (\cref{linked}), and present a way of gluing layered 3-graphs (\cref{graphsum}) to obtain a new one with vanishing uniform Tur\'an density, which is a key ingredient in the induction step when proving~\cref{layered} in~\cref{sec-final}.

\subsection{$3$-graphs with vanishing uniform Tur\'{a}n density}\label{sec:uni}

 Reiher, R{\"o}dl and Schacht \cite{reiher2018hypergraphs} gave the following characterization of $3$-graphs with vanishing uniform Tur\'{a}n density. 

\begin{theorem}[Reiher, R{\"o}dl and Schacht, \cite{reiher2018hypergraphs}]\label{rodl}
For any $3$-graph $F$, the following are equivalent.
\stepcounter{propcounter}
\begin{enumerate}[label = \rm({\bfseries \Alph{propcounter}\arabic{enumi}})]      
    \item\label{0} $\pi_{\points}(F)=0$;
    \item\label{coloring} there is a labeling $\sigma$ of the vertex set $V(F)$ and a $3$-coloring $\phi: \partial F \rightarrow \{\emph {red,blue,green}\}$ such that every edge $uvw\in E(F)$ with $\sigma(u)<\sigma(v)<\sigma(w)$ satisfies
$$\phi(uv) = \emph{red},~\phi(uw) = \emph{blue},~\phi(vw) = \emph{green}.$$
\end{enumerate}
\end{theorem}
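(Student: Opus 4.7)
My plan is to prove each direction of the equivalence separately, and to handle the two in very different ways.

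For \ref{0} $\Rightarrow$ \ref{coloring}, I would argue by contrapositive via a random construction. Assume $F$ admits no labeling and 3-coloring with the stated property. On the vertex set $[n]$ I would independently color each edge of $K_n$ uniformly at random from $\{\text{red},\text{blue},\text{green}\}$, and define a 3-graph $H$ by putting $ijk$ with $i<j<k$ into $E(H)$ exactly when $ij$ is red, $ik$ is blue, and $jk$ is green. Each triple lies in $H$ with probability $1/27$, and a standard Chernoff plus union bound shows that with high probability $H$ is uniformly $(1/27,\eta)$-dense for any prescribed $\eta>0$. Finally, any embedding of $F$ into $H$ would, by restricting the natural ordering of $[n]$ and the random edge coloring to $V(F)$, induce a labeling and 3-coloring of $F$ of exactly the form \ref{coloring}, contradicting our assumption. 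Hence $\pi_{\points}(F)\geq 1/27$, so in particular $\pi_{\points}(F)\neq 0$.

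The substantive direction \ref{coloring} $\Rightarrow$ \ref{0} I would treat using the hypergraph regularity method. Suppose $F$ carries a labeling $\sigma$ and a 3-coloring $\phi$ as in \ref{coloring}, and let $H$ be a uniformly $(d,\eta)$-dense 3-graph with $d>0$ fixed and $\eta\ll d$. Apply the strong 3-graph regularity lemma to obtain a vertex partition $V_1,\dots,V_t$ and, for each pair of parts, a decomposition of the bipartite pair-graph into regular pieces of controlled density, with respect to which the edges of $H$ are regular on almost all triads. The uniform denseness assumption then passes cleanly to the reduced structure: a positive fraction of the triads supports an $H$-edge density bounded away from $0$. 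I would then embed $F$ vertex by vertex in the order prescribed by $\sigma$. At step $v$, each previously embedded vertex $u$ contributes a color constraint $\phi(uv)\in\{\text{red},\text{blue},\text{green}\}$ which selects one of three types of regular bipartite piece between the cluster hosting $u$ and the cluster chosen for $v$; because each hyperedge of $F$ reads as red-blue-green in $\sigma$-order, these constraints assemble into three globally consistent piece-families, and the standard counting lemma inside regular triads then produces a positive fraction of copies of $F$ in $H$.

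The main obstacle is exactly this global compatibility step. One must show that, when the regularity decomposition is chosen appropriately, the three color classes of $\phi$ can be simultaneously lifted to three collections of regular bipartite pieces so that at every embedding step the constraints imposed by already-placed vertices admit many candidate images. The crucial feature of \ref{coloring} that makes this possible is that $\phi$ uses only three colors and imposes only a red-blue-green pattern per hyperedge, so each new vertex inherits at most three ``slot types'' from its predecessors, keeping the embedding problem in the regime where standard dense counting applies. Once this bookkeeping is in place, the remainder of the argument is a routine application of the counting/embedding lemma for regular 3-partite triads.
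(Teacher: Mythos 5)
The paper does not prove this theorem: it is imported verbatim from Reiher, R\"odl and Schacht~\cite{reiher2018hypergraphs}, so there is no internal proof to compare against. I will therefore assess your sketch on its own merits.

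Your argument for \ref{0} $\Rightarrow$ \ref{coloring} (via the contrapositive) is correct and is the standard lower-bound construction. Two small remarks: the edge-indicators for overlapping triples are not independent, so ``Chernoff'' is technically the wrong tool --- one should invoke an Azuma-type concentration via the edge-exposure martingale (the paper does the same kind of construction in Claim~\ref{427} and similarly brushes this aside). Also, your argument actually yields the stronger statement that $\pi_{\points}(F)\geq 1/27$ whenever \ref{coloring} fails, which is what Reiher, R\"odl and Schacht use to show the jump at $1/27$; that is fine and harmless here.

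For \ref{coloring} $\Rightarrow$ \ref{0}, however, there is a genuine gap. You correctly identify that the ``global compatibility step'' is where the argument must succeed, but you then dismiss it as routine bookkeeping. It is not: it is the heart of the Reiher--R\"odl--Schacht theorem. Weak regularity for $3$-graphs does not suffice; one needs the strong hypergraph regularity lemma together with an embedding/counting lemma adapted to uniformly dense hosts. Moreover, the way uniform $(d,\eta)$-denseness interacts with the regular partition is subtle: Reiher, R\"odl and Schacht pass to a \emph{reduced hypergraph} structure in which the three colour constraints become three separate combinatorial objects, and proving that a simultaneous dense spot for all three exists (so that the embedding can proceed greedily in $\sigma$-order) requires a dedicated Ramsey-type argument at the reduced level, not merely the dense counting lemma inside a single triad. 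Your sketch leaves this entirely open; as written it does not constitute a proof of the hard direction, and the phrase ``once this bookkeeping is in place, the remainder\ldots{} is routine'' is precisely the step that would need a substantial lemma.
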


In our approach, the following reformulation of~\cref{rodl} using the structure property of link graphs is more helpful for us.
\begin{lemma}[Ding, Liu, Wang and Yang, \cite{unico}] \label{permutation}
For any $3$-graph $F$, the following are equivalent.
\begin{itemize}
    \item $\pi_{\points}(F)>0$; 
    \item for every labeling $\sigma$ of $V(F)$, there is some vertex $v\in V(F)$ such that $L_{F}(v)$  contains a monotone $P_3$.
\end{itemize}
\end{lemma}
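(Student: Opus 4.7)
The plan is to leverage Theorem~\ref{rodl}: by that theorem, $\pi_{\points}(F)=0$ is equivalent to condition \ref{coloring}, so by taking contrapositives it suffices to show that \ref{coloring} holds if and only if there is some labeling $\sigma$ of $V(F)$ for which no $L_F(v)$ contains a monotone $P_3$. In fact, I will show that the \emph{same} labeling $\sigma$ witnesses both sides, which is the natural link between the two formulations.

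For the forward direction, suppose $(\sigma,\phi)$ satisfies \ref{coloring}. Assume for contradiction that some $L_F(v)$ contains a monotone $P_3$ on vertices $x,y,w$ with $\sigma(x)<\sigma(y)<\sigma(w)$. Then $vxy,vyw\in E(F)$, and a short case analysis on the position of $\sigma(v)$ relative to $\sigma(x),\sigma(y),\sigma(w)$ (four cases, according to which of the four ``slots'' $\sigma(v)$ falls into) shows that the color rule in \ref{coloring} forces the pair $\{v,y\}$ to receive two distinct colors --- one prescribed by the edge $vxy$, the other by $vyw$ --- a contradiction.

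For the reverse direction, suppose $\sigma$ is a labeling for which no $L_F(v)$ contains a monotone $P_3$. Define $\phi:\partial F\to\{\text{red},\text{blue},\text{green}\}$ as follows: for a pair $xy\in\partial F$ with $\sigma(x)<\sigma(y)$ contained in some edge $xyz\in E(F)$, set $\phi(xy)$ to be red, blue, or green according to whether $\sigma(z)>\sigma(y)$, $\sigma(x)<\sigma(z)<\sigma(y)$, or $\sigma(z)<\sigma(x)$ respectively. The substantive step is to show this rule is well-defined. If $xy$ lies in two edges $xyz$ and $xyw$ whose prescribed colors disagree, then $z$ and $w$ lie in two different of the three ``regions'' above. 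Each of the three possible pairs of regions yields either a length-two path $z$--$y$--$w$ in $L_F(x)$ or a length-two path $z$--$x$--$w$ in $L_F(y)$ whose labels under $\sigma$ are monotone, contradicting the assumption on $\sigma$. Hence $\phi$ is well-defined, and by construction $(\sigma,\phi)$ witnesses \ref{coloring}.

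The main bookkeeping step is the well-definedness check in the reverse direction, but since the three regions are disjoint and each forbidden configuration directly exhibits a monotone $P_3$ in some link graph, the verification is routine rather than conceptually hard; there is no essential obstacle beyond tracking indices.
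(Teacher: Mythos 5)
Your proof is correct. The paper itself does not prove Lemma~\ref{permutation} (it is quoted from \cite{unico}), so there is no internal argument to compare against; but deriving it from Theorem~\ref{rodl} as you do is the natural route, since \ref{coloring} is exactly a coloring reformulation of $\pi_{\points}(F)=0$ and the lemma is a further translation into a link-graph condition. Both directions check out: in the forward direction the four placements of $\sigma(v)$ relative to a monotone $x$--$y$--$w$ in $L_F(v)$ each force the pair $vy$ to receive two different colors from $\phi$ via the edges $vxy$ and $vyw$; in the reverse direction your rule for $\phi$ (red/blue/green according to whether the third vertex of an edge through $xy$ lies above, between, or below $\sigma(x),\sigma(y)$) does reproduce the pattern in \ref{coloring} once well-defined, and each of the three conflicting region-pairs does exhibit a monotone $P_3$ in $L_F(x)$ or $L_F(y)$. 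The only minor point worth spelling out in a polished write-up is that $\phi$ only needs to be, and only can be, defined on pairs actually lying in $\partial F$, which is what your construction does.
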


The next lemma strengthens \cref{permutation} for $(2,1)$-type 3-graphs, which will play an important role in our proof. 

\begin{lemma}[Ding, Liu, Wang and Yang, \cite{unico}]\label{strengthen}
        Let $F$ be a $(2,1)$-type $3$-graph with bipartition $V(F)=A\cup B$ such that $|e\cap A|=2$ and $|e\cap B|=1$ for every $e\in E(F)$. Then $\pi_{\points}(F)>0$ if and only if for every labeling of  $A$, there exists $u\in B$ such that $L_F(u)$ contains a monotone $P_3$. 
\end{lemma}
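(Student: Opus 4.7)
The plan is to deduce \cref{strengthen} from \cref{permutation} by exploiting how the $(2,1)$-type bipartition constrains link graphs. Two structural observations will do all the work. First, for any $u\in B$, every edge of $F$ containing $u$ uses two vertices of $A$, so the link graph $L_F(u)$ has all its edges inside $A$; consequently, whether $L_F(u)$ contains a monotone $P_3$ depends only on the relative order of the $A$-vertices. Second, for any $v\in A$, every edge of $F$ through $v$ consists of $v$, one further vertex of $A$, and one vertex of $B$, so $L_F(v)$ is a bipartite graph with parts $A\setminus\{v\}$ and $B$.

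For the ``if'' direction, suppose that under every labeling of $A$ some $u\in B$ sees a monotone $P_3$ in $L_F(u)$. To obtain $\pi_{\points}(F)>0$, by \cref{permutation} it suffices to show that for every labeling $\sigma$ of $V(F)$ some vertex has a monotone $P_3$ in its link. Given such $\sigma$, let $\sigma_A$ be the labeling of $A$ induced by $\sigma|_A$; by hypothesis some $u\in B$ has a monotone $P_3$ in $L_F(u)$ under $\sigma_A$, and by the first observation above this is also a monotone $P_3$ of $L_F(u)$ under $\sigma$.

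For the ``only if'' direction we prove the contrapositive: given a labeling $\sigma_A$ of $A$ such that no $u\in B$ sees a monotone $P_3$ in $L_F(u)$, we extend $\sigma_A$ to a labeling $\sigma$ of $V(F)$ under which no vertex has a monotone $P_3$ in its link, which by \cref{permutation} yields $\pi_{\points}(F)=0$. The key idea is to place all labels of $B$ strictly below those of $A$: assign $B$ the labels $\{1,\dots,|B|\}$ in any order, and assign $A$ the labels $\{|B|+1,\dots,|V(F)|\}$ in the order given by $\sigma_A$. For $u\in B$, monotone $P_3$s in $L_F(u)$ correspond exactly to monotone $P_3$s under $\sigma_A$, and none exists by assumption. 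For $v\in A$, any $P_3$ in the bipartite graph $L_F(v)$ has its middle vertex and its two endpoints on opposite sides of the bipartition $(A\setminus\{v\},B)$; in either configuration the middle label is either strictly smaller or strictly larger than both endpoint labels, so such a $P_3$ cannot be monotone.

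The argument is a short manipulation of labelings and presents no real obstacle; the only substantive point is the observation that the bipartite structure of $L_F(v)$ for $v\in A$ makes the naive ``$B$ below $A$'' extension succeed automatically, so that verifying the absence of monotone $P_3$s reduces entirely to the hypothesis on vertices of $B$.
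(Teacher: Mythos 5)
Your derivation of \cref{strengthen} from \cref{permutation} is correct. Note that the paper does not actually prove this lemma---it is imported by citation from the earlier work~\cite{unico}---so there is no in-paper proof to compare against. That said, both directions of your argument are sound. In the forward direction you correctly use that $L_F(u)$ for $u\in B$ lives entirely inside $A$, so a monotone $P_3$ there is governed only by the relative order on $A$, which the induced labeling $\sigma_A$ preserves. In the converse direction the crucial point is the choice of extension: placing all of $B$ strictly below all of $A$ (equivalently, strictly above) forces every $P_3$ in the bipartite link graph $L_F(v)$ for $v\in A$ to have its middle vertex be an extremum rather than the median of the three labels, killing all monotone $P_3$s at $A$-vertices automatically; a labeling that interleaved $B$ with $A$ would not have this property, and you correctly avoid that. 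The argument is exactly the kind of short reduction one would expect here, and it is the natural way to obtain the $(2,1)$-type refinement from the general characterization.
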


\subsection{Half-bipartite graphs in dense graph distributions}\label{sec:hb}
The first notion is a family of bipartite graphs which is relevant for the characterization of 3-graphs with vanishing uniform Tur\'an density.
\begin{defn}
    A graph $G$ on a vertex set with a labeling $\sigma$ is \emph{half-bipartite} if it does not contain a monotone $P_3$. Define the \emph{complete half-bipartite} graph $B_k$ on $2k$ vertices to be the graph on vertices $u_1,v_1,u_2,v_2,\dots,u_k,v_k$, in increasing order of $\sigma$, and with edges being the pairs $u_iv_j$ for all $1\leq i\leq j\leq k$. 
\end{defn}

It is not hard to see that a half-bipartite graph must be bipartite.
We first observe that the complete half-bipartite graph is a \emph{universal} graph for all half-bipartite graphs. 

\begin{proposition}\label{prop:universal-hb}
       Every half-bipartite graph $F$ on $k$ vertices is an ordered subgraph of $B_k$.
\end{proposition}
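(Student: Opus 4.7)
The plan is to exhibit an explicit order-preserving embedding $\phi: V(F) \to V(B_k)$ that sends edges of $F$ to edges of $B_k$. Write the vertices of $F$ as $w_1, w_2, \ldots, w_k$ listed in increasing order of the labeling $\sigma$. The first step is to note the structural consequence of being half-bipartite: for every $w_i$, it cannot simultaneously have a neighbor $w_a$ with $a<i$ and a neighbor $w_c$ with $c>i$, since $w_a w_i w_c$ would form a monotone $P_3$. So each vertex is of one of two types, which I would call \emph{left} (all neighbors precede it under $\sigma$) or \emph{right} (all neighbors follow it); isolated vertices can be declared either type arbitrarily.

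Next I would define $\phi$ by $\phi(w_i) = u_i$ if $w_i$ is right and $\phi(w_i) = v_i$ if $w_i$ is left. The choice is motivated by the observation that in $B_k$ itself, $u_i$ is adjacent only to $v_i, v_{i+1}, \ldots, v_k$, all of which come after $u_i$ in $\sigma$, so $u_i$ is a right vertex of $B_k$; similarly $v_j$ is adjacent only to $u_1, \ldots, u_j$, which all precede $v_j$, so $v_j$ is a left vertex of $B_k$. Thus $\phi$ matches the left/right dichotomy of $F$ to the natural bipartition of $B_k$.

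It then remains to verify two properties. For order preservation: since both $u_i$ and $v_i$ lie strictly between $v_{i-1}$ and $u_{i+1}$ in the ordering of $B_k$, whenever $i < i'$ we have $\sigma(\phi(w_i)) < \sigma(\phi(w_{i'}))$ regardless of the types chosen. For edge preservation: given any edge $w_a w_b$ of $F$ with $a < b$, the vertex $w_a$ has a later neighbor and so is right, while $w_b$ has an earlier neighbor and so is left; hence $\phi(w_a)\phi(w_b) = u_a v_b$, and since $a < b$ this is indeed an edge of $B_k$.

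There is no real obstacle here: the proposition reduces to the single observation that the constraint forced on each vertex by the absence of a monotone $P_3$ is exactly the constraint satisfied by the two sides of $B_k$. The proof is essentially a direct construction plus two routine checks, and it will also explain why $B_k$ itself is half-bipartite (it is the image of itself under $\phi$ when $F = B_k$).
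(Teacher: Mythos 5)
Your proof is correct and follows essentially the same construction as the paper's: both map $w_i$ to $u_i$ precisely when $w_i$ has a later neighbor and to $v_i$ otherwise, with the left/right dichotomy you introduce being exactly the property the paper exploits implicitly. You simply spell out the order-preservation and edge-preservation checks that the paper leaves to the reader.
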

\begin{proof}
    Let the vertices of $F$ be $w_1,w_2,\dots, w_k$ in increasing order, then for each $i\in[k]$, map $w_i$ to $u_i$ if there exists $j>i$ such that $w_iw_j\in E(F)$, and map it to $v_i$ otherwise.
\end{proof}

Note that by Lemma~\ref{permutation} a characterization of $3$-graphs $F$ with $\pi_{\points}(F)=0$ is that there exists a labeling of $V(F)$ in which the link graph of every vertex is half-bipartite.

Another tool that we will use are graph distributions, which can be thought of as probability distributions on the space of graphs on a fixed set of vertices. In other words, for each graph $G$ on a (finite) vertex set $S$, we assign it a non-negative real number $X_G$, which add up to 1. So we get a graph distribution on the vertex set $S$, denoted by $X$, by setting $\Pr(X=G)=X_G$. Further, if for some $\varepsilon>0$, $\Pr(uv\in E(X))\geq\varepsilon$ holds for every pair of vertices $uv$,  then we say that $X$ is \textit{$\varepsilon$-dense}.

Let $H$ be an $n$-vertex $3$-graph satisfying $\delta_{\mathrm{co}}(H)\geq\varepsilon n$. There is a simple way of obtaining an $\varepsilon$-dense graph distribution $X$ out of $H$ by selecting a vertex $v\in V(H)$ uniformly at random, and then taking $X=L_H(v)$.

So far, we have established a link between $\pi_{\points}$ and half-bipartite graphs, and between $\delta_{\mathrm{co}}$ (and thus $\pi_{\mathrm{co}}$) and $\varepsilon$-dense graph distributions. The following lemma connects them all.

\begin{lemma}\label{halfbip}
    For every positive integer $k$ and real number $\varepsilon>0$, there exist a positive integer $n_0$ and a real number $\delta>0$ for which the following hold. For every $\varepsilon$-dense graph distribution $X$ on vertex set $[n]$ with $n\geq n_0$, there exists a set $S$ of $2k$ vertices such that, with probability at least $\delta$, $X$ contains the complete half-bipartite graph $B_k$ in the natural order of numbers on $S$ as a subgraph.
\end{lemma}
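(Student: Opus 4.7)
The plan is to use Ramsey's theorem to pass from $X$ to an approximately exchangeable distribution on a large subset of $[n]$, and then exploit the inclusion $B_k \subseteq K_{k,k}$ together with Sidorenko's inequality for the complete bipartite graph $K_{k,k}$ to deduce a positive lower bound on $\Pr(B_k \subseteq X[S])$ for a suitable $S$.

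For each naturally-ordered $2k$-subset $S \subseteq [n]$, let $\mu_S$ be the distribution of $X[S]$, viewed as a probability measure on the $2^{\binom{2k}{2}}$ graphs on vertex set $[2k]$. Discretize the simplex of such measures into finitely many cells of total-variation diameter at most $\eta$, and color each $2k$-subset of $[n]$ by its cell. For $n$ sufficiently large, Ramsey's theorem for $2k$-uniform hypergraphs yields a subset $V_0 \subseteq [n]$ with $|V_0|$ arbitrarily large in which every $2k$-subset belongs to the same cell. Consequently, all $\mu_S$ for $S \in \binom{V_0}{2k}$ agree up to $\eta$-error with a common distribution $\mu^{\star}$, and $X|_{V_0}$ is approximately exchangeable on $V_0$. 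By $\varepsilon$-density, the edge marginals of $\mu^{\star}$ are at least $\varepsilon - O(\eta)$, and by a de Finetti--type approximation for finite exchangeable sequences, $\mu^{\star}$ can be modelled by sampling from a graphon $W$ with $\int W \geq \varepsilon - O(\eta)$. Since $B_k \subseteq K_{k,k}$ as a subgraph and $W \leq 1$ pointwise, we have $t(B_k, W) \geq t(K_{k,k}, W) \geq \bigl(\int W\bigr)^{k^2} \geq (\varepsilon - O(\eta))^{k^2}$, where the middle inequality is Sidorenko's inequality for complete bipartite graphs (provable by iterated Jensen's inequality). This gives $\Pr(B_k \subseteq X[S]) \geq (\varepsilon - O(\eta))^{k^2} - O(\eta)$ for any $S \in \binom{V_0}{2k}$, so taking $\eta$ small enough produces $\delta = \varepsilon^{k^2}/2$.

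\emph{Main obstacle.} The chief technical difficulty is making the Ramsey-to-graphon reduction quantitatively precise, so that $n_0$ and $\delta$ come out explicitly in the statement; in particular, the de Finetti-type step requires $|V_0|$ to be much larger than $2k/\eta$, which must be tracked through the Ramsey bound. A more self-contained alternative that avoids graphons altogether is to prove directly, by induction on $k$ with iterated Cauchy--Schwarz exploiting the nested neighborhood structure $N(v_1) \subseteq N(v_2) \subseteq \cdots \subseteq N(v_k)$ of $B_k$, that any exchangeable distribution on $[2k]$-vertex graphs with edge marginals $\geq p$ contains $B_k$ with probability at least $p^{k^2}$; this retains the essence of the Sidorenko bound while remaining purely combinatorial.
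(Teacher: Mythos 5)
Your overall strategy — use Ramsey to regularize the subsample distributions, then invoke a representation theorem plus Sidorenko — differs genuinely from the paper's, which runs a two-stage induction on $k$ inside the Ramsey step: it uses Ramsey to find a large clique where the \emph{smaller} half-bipartite graph appears with good probability on every $(2k-2)$-subset, and then a Bonferroni/pigeonhole argument ($\delta' t > 1$) to find two such copies that overlap correctly and simultaneously, gluing them into $B_k'$ and then $B_k$. That argument never needs exchangeability or graphons.

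Your proposal, however, has a genuine gap at the de Finetti step, and the alternative you suggest to repair it is built on a false claim. The Ramsey step gives you that all ordered $2k$-subsets $S \subseteq V_0$ have (approximately) the same distribution $\mu^\star$ — a \emph{spreadability} property, not exchangeability. For finite $V_0$ these are genuinely different: nothing forces $\mu^\star$ to be invariant under permutations of $[2k]$, nor to admit an (approximate) Aldous--Hoover mixture-of-graphons representation. Even if you had exchangeability, you would get a \emph{mixture} of graphons rather than a single $W$; that particular issue is fixable by convexity of $x \mapsto x^{k^2}$, but the spreadability-to-exchangeability jump is not. Your fallback — ``any exchangeable distribution on $[2k]$-vertex graphs with edge marginals $\geq p$ contains $B_k$ with probability at least $p^{k^2}$'' — is simply false: the uniform distribution over graphs on $[2k]$ with exactly one edge is exchangeable with all edge marginals equal to $p = \binom{2k}{2}^{-1} > 0$, yet $\Pr(B_k) = 0 < p^{k^2}$ for $k \geq 2$. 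The property you actually need is not exchangeability of $\mu^\star$ in isolation but the extendability of $\mu^\star$ to a consistent (spreadable) distribution on a much larger vertex set — and extracting leverage from \emph{that} is exactly what the paper's inductive pigeonhole argument does. A compactness route (pass to an infinite spreadable limit, apply Kallenberg's theorem that infinite spreadable arrays are exchangeable, then Aldous--Hoover, then Sidorenko plus Jensen) could in principle salvage your plan, but it is a substantially different and heavier argument than what you have written, and none of those pieces appears in your proposal.
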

 
\begin{proof}
    We will prove this statement by induction on $k$. For $k=1$ the statement is trivial, since $B_1$ is a single edge. Let $B'_k=B_k-u_k$. We prove the induction step in two parts. First we show that if the statement holds for $B_{k-1}$ then it holds for $B'_k$, then we show that if it holds for $B'_k$ then it holds for $B_k$ as well.

    Fix $\varepsilon>0$. Suppose that the statement holds for $k-1$, and let $\delta'$ and $n'_0$ be the corresponding values of $\delta$ and $n_0$. Let $X$ be an $\varepsilon$-dense graph distribution on $[n_1]$, where we will choose the value of $n_1$ later. We construct an auxiliary complete $(2k-2)$-graph $H_1$ on $[n_1]$ as follows. For every set $S$ of $2k-2$ vertices, if the complete half-bipartite graph $B_{k-1}$ on $S$ appears in $X$ with probability at least $\delta'$, then color the edge $S$ of $H_1$ in red, otherwise color $S$ in blue.

    By the induction hypothesis, the coloring of $H_1$ cannot contain a blue clique of size $n'_0$. Therefore, by Ramsey's theorem, there is an integer $n'_1$ such that taking $n_1\geq n'_1$ concludes that $H_1$ contains a red clique of size $2k-3+t$ for $t=\lfloor1/\delta'\rfloor+1$. Denote the vertices in this red clique as $u_1, v_1, u_2, v_2, \dots, u_{k-1}, w_1, w_2, \dots, w_t$, in increasing order. For every $1\leq i\leq t$, the probability that the graph $B_{k-1}$ on vertices $u_1, v_1, u_2, v_2, \dots, u_{k-1}, w_i$ appears in $X$ is at least $\delta'$. These probabilities add up to $\delta' t>1$, meaning that there exist $1\leq i<j\leq t$ such that with probability at least $(\delta't-1)/{t \choose 2}=:\delta_1$ the graph $B_{k-1}$ appears simultaneously on $u_1, v_1, u_2, v_2, \dots, u_{k-1}, w_i$ and $u_1, v_1, u_2, v_2, \dots, u_{k-1}, w_j$. Then on vertices $u_1, v_1, u_2, v_2, \dots, u_{k-1}, w_i, w_j$, a copy of $B'_k$ appears in $X$ with probability at least $\delta_1$. This completes the first part of the induction step.

    The second part of the induction step is similar to the first one. Take $n_2$ large enough, and let $X$ be an $\varepsilon$-dense graph distribution on $[n_2]$. We construct a $2$-coloring of the complete $(2k-1)$-graph $H_2$ on $[n_2]$ by coloring the edge $S$ in red if the graph $B'_k$ on $S$ appears in $X$ with probability at least $\delta_1$, and blue otherwise. Since there cannot be a blue clique of size $n'_1$, by Ramsey's theorem there exists a red clique of size $2k-3+2s$ for $s=\lfloor 1/\delta_1\rfloor+1$. Denote the vertices of this clique as $u_1, v_1, u_2, v_2, \dots, u_{k-2}, v_{k-2}, w_1, z_1, w_2, z_2, \dots, w_s, z_s, v_k$. There exist $1\leq i<j\leq s$ such that, with probability at least $\delta:=(\delta_1 s-1)/{s \choose 2}$, the graph $B'_k$ appears simultaneously on $u_1, v_1, u_2, v_2, \dots, u_{k-2}, v_{k-2}, w_i, z_i, v_k$ and $u_1, v_1, u_2, v_2, \dots, u_{k-2}, v_{k-2}, w_j, z_j, v_k$. Then a copy of $B_k$ appears on the vertices $u_1, v_1, u_2, v_2, \dots, u_{k-2}, v_{k-2}, w_i, z_i, w_j, v_k$ with probability at least $\delta$. This completes the second part of the induction step.
\end{proof}

With an extra application of Ramsey's theorem, we can find a set $S$ of $k$ vertices in any large $\varepsilon$-dense graph distribution such that every half-bipartite graph on $S$ appears with a positive probability.

\begin{lemma}\label{hb}
    For every $\varepsilon>0$ and positive integer $k$, there exist a positive integer $n_0$ and a real number $\delta>0$ with the following property. For every $\varepsilon$-dense graph distribution $X$ on $[n]$ with $n\geq n_0$, there exists a set $S$ of $k$ vertices on which every half-bipartite graph appears in $X$ in the natural order of numbers with probability at least $\delta$.
\end{lemma}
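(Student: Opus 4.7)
My plan is to apply Ramsey's theorem for $k$-uniform hypergraphs on top of Lemma \ref{halfbip}. Let $H_1,\ldots,H_m$ enumerate the (finitely many) half-bipartite graphs on $k$ labeled vertices $1<2<\cdots<k$. With a $\delta>0$ to be chosen below, color each $k$-subset $T\subseteq[n]$ by a vector $(c_1(T),\ldots,c_m(T))\in\{0,1\}^m$ in which $c_i(T)=1$ exactly when $X$ contains $H_i$ on $T$ (in the induced order of $T$) with probability at least $\delta$. The task then reduces to locating one $k$-subset whose color is $(1,\ldots,1)$.

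Let $(n_0^\ast,\delta^\ast)$ be the pair produced by Lemma \ref{halfbip} for the given $\varepsilon$ and $k$, take $\delta:=\delta^\ast$, and choose $n_0$ large enough that the Ramsey number for $k$-uniform hypergraphs with $2^m$ colors guarantees a monochromatic subset $U\subseteq[n]$ of size $n_0^\ast$. The restriction of $X$ to $U$ (with the inherited order) remains an $\varepsilon$-dense graph distribution, so Lemma \ref{halfbip} applied inside $U$ yields a $2k$-subset $U'\subseteq U$ such that $X$ contains $B_k$ on $U'$ with probability at least $\delta^\ast$. For each $i$, Proposition \ref{prop:universal-hb} supplies an order-preserving embedding $\phi_i:\{1,\ldots,k\}\to V(B_k)$; identifying $V(B_k)$ with $U'$ in the natural order, this determines a $k$-subset $T_i=\phi_i(\{1,\ldots,k\})\subseteq U$ such that, whenever $B_k$ appears on $U'$ in $X$, the graph $H_i$ appears on $T_i$ in the induced order. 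Hence $c_i(T_i)=1$, and monochromaticity of $U$ forces $c_i(T)=1$ for every $k$-subset $T\subseteq U$; varying $i$, any $k$-subset $S\subseteq U$ will do.

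I do not anticipate a serious obstacle: the substantive work is already carried out by Lemma \ref{halfbip}, and the role of Ramsey is purely to synchronize the a priori different embedded subsets $T_i$ (each tailored to one $H_i$) into a single $S$ that works for every half-bipartite graph simultaneously. The only care needed is the order of parameters: $\delta^\ast$ and $n_0^\ast$ must be fixed first so that the $2^m$-colored Ramsey number can be chosen large enough for the monochromatic set $U$ to still admit an application of Lemma \ref{halfbip} inside it.
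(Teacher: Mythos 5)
Your proof is correct and follows essentially the same approach as the paper: apply Ramsey to a finite coloring of $k$-subsets recording which half-bipartite graphs appear with probability at least $\delta$, then use Lemma~\ref{halfbip} inside the monochromatic clique and the universality of $B_k$ (Proposition~\ref{prop:universal-hb}) to deduce that every half-bipartite graph appears on some $k$-subset. The only cosmetic differences are that the paper argues by contradiction (coloring each $k$-set by a single ``bad'' half-bipartite graph appearing with low probability) whereas you argue directly with a finer $\{0,1\}^m$-valued coloring; both are bounded colorings, so the Ramsey step works either way.
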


\begin{proof}
    Let $\delta=\delta_{\ref{halfbip}}(k,\varepsilon), n'_0=n_{0\ref{halfbip}}(k,\varepsilon)$ be as in Lemma~\ref{halfbip}.  Suppose the contrary that for every set $S$ of $k$ vertices, there exists a half-bipartite graph $F$ that appears with probability less than $\delta$. On an auxiliary complete $k$-graph on $[n]$, assign the label $F$ to the edge $S$.

    Since there is a bounded number of half-bipartite graphs on $k$ vertices, by Ramsey's theorem, if $n$ is large enough there exists a clique $K$ of size $n'_0$ where all edges receive the same label $F$. By the choice of $n'_0$, there exists a set of $2k$ vertices within this clique for which the complete half-bipartite graph $B_k$ on these vertices appears in $X$ with probability at least $\delta$. But recall~\cref{prop:universal-hb} that every half-bipartite graph on $k$ vertices is a subgraph of $B_k$. Thus there are $k$ vertices in $K$ for which $F$ appears with probability at least $\delta$, contradicting the fact that this set of $k$ vertices received the label $F$.
\end{proof}

\subsection{Layered 3-graphs with vanishing uniform Tur\'{a}n density}\label{sec:layer-vanishing}

Recall that \cref{rodl} gives a characterization of $3$-graphs with vanishing uniform Tur\'{a}n density. But, in order to prove Theorem~\ref{layered}, it turns out to be more useful to get a specific characterization for layered 3-graphs. Let $F$ be a layered $3$-graph with $k$ layers. For all $1\leq i<j\leq k$, we denote by $F_{i,j}$ the induced subhypergraph of $F$ on the union of the $i$-th layer and the $j$-th layer. Further, we say that $\{i,j\}$ is a \emph{linked pair} if there exists an edge in $F_{i,j}$. Observe that by \ref{uniquemax}, if $\{i,j\}$ is a linked pair, then $F_{i,j}$ is a $(2,1)$-type $3$-graph such that each edge has two vertices on the $i$-th layer and one vertex on the $j$-th layer.

\begin{lemma}\label{linked}
    A layered $3$-graph $F$ has $\pi_{\points}(F)=0$ if and only if $\pi_{\points}(F_{i,j})=0$ for all linked pairs $\{i,j\}$.
\end{lemma}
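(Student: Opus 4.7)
The plan is to invoke Theorem~\ref{rodl}, which characterises $\pi_{\points}(F)=0$ by the existence of a labeling $\sigma$ of $V(F)$ and a $3$-coloring $\phi$ of $\partial F$ so that every edge $uvw$ with $\sigma(u)<\sigma(v)<\sigma(w)$ gets the red-blue-green coloring. The forward direction is immediate: for each linked pair $\{i,j\}$, the restrictions of $\sigma$ and $\phi$ to $V(F_{i,j})$ and $\partial F_{i,j}$ still witness \ref{coloring} for $F_{i,j}$.

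For the reverse direction, I would build $\sigma$ layer by layer, so that lower layers receive strictly smaller labels. The key structural observation is that, by \ref{fixtwo}, any two edges of $F$ each with two vertices in layer $i$ must have their remaining vertex in a common layer; hence there is at most one $j(i)>i$ for which $\{i,j(i)\}$ is a linked pair witnessed by edges whose two low vertices sit in layer $i$. When such a $j(i)$ exists, \cref{strengthen} applied to $F_{i,j(i)}$---whose $(2,1)$-type bipartition has layer $i$ on the bottom and layer $j(i)$ on top---supplies a labeling $\tau_i$ of layer $i$ under which $L_{F_{i,j(i)}}(w)$ contains no monotone $P_3$ for every $w$ in layer $j(i)$; otherwise let $\tau_i$ be arbitrary. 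Then set $\sigma=\sum_{i=1}^{k}\tau_i$.

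I would then define $\phi$ on each pair $\{a,b\}\in\partial F$ with $\sigma(a)<\sigma(b)$ using any edge $\{a,b,c\}$ of $F$ containing it: color red if $\sigma(c)>\sigma(b)$, blue if $\sigma(a)<\sigma(c)<\sigma(b)$, and green if $\sigma(c)<\sigma(a)$. A direct check shows that, under this rule, every edge of $F$ is automatically red-blue-green in $\sigma$-order, so once $\phi$ is seen to be well-defined \ref{coloring} holds and $\pi_{\points}(F)=0$ follows.

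The main obstacle is the well-definedness of $\phi$: a pair $\{a,b\}$ may lie in several edges $\{a,b,c_1\},\{a,b,c_2\},\dots$, and all choices of $c$ must produce the same color. By \ref{fixtwo}, all $c_i$'s belong to a common layer $\ell$. When $\ell\neq f(a)$, the position of $\sigma(c_i)$ relative to $\sigma(a),\sigma(b)$ depends only on the triple $(\ell,f(a),f(b))$---note that the within-layer case $f(a)=f(b)$ forces $\ell>f(a)$ by \ref{uniquemax}---and consistency is automatic. The delicate case is $\ell=f(a)<f(b)$: there $\{f(a),f(b)\}$ is a linked pair with $j(f(a))=f(b)$, the admissible $c_i$ are exactly the neighbors of $a$ in $L_{F_{f(a),f(b)}}(b)$, and consistency amounts to those neighbors lying on a single side of $a$ under $\tau_{f(a)}$---precisely the ``no monotone $P_3$ through $a$'' property guaranteed by our choice of $\tau_{f(a)}$.
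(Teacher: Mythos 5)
Your proof is correct. The labeling $\sigma=\sum_i\tau_i$ you construct is precisely the one the paper builds, resting on the same two observations: condition \ref{fixtwo} forces each layer $i$ to link to at most one higher layer $j(i)$, and \cref{strengthen} then supplies a labeling $\tau_i$ of layer $i$ making every $L_{F_{i,j(i)}}(w)$ half-bipartite. Where you diverge is only in the final verification. The paper invokes \cref{permutation}: it analyzes the components of each link graph $L_F(v)$, showing via \ref{fixtwo} that every component is confined to one or at most two layers, that cross-layer components are bipartite with one side entirely below the other, and that within-layer components inherit the half-bipartite property from $\tau_j$. You instead verify condition \ref{coloring} of \cref{rodl} directly by constructing an explicit $3$-coloring $\phi$, which requires the extra well-definedness check on $\phi$ that you carry out. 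The two routes are parallel --- your delicate case $\ell=f(a)$ is a pair-wise reflection of the paper's analysis of a within-layer component of $L_F(v)$, and both ultimately rest on the fact that in a half-bipartite graph the neighbors of any vertex lie on one side of it. The paper's version is slightly more economical since the link-graph viewpoint sidesteps the three-way color case split, but your argument is sound.
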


\begin{proof}
Let $F$ be a layered $3$-graph with $k$ layers. Since $F_{i,j}$ is a subhypergraph of $F$, it clearly holds that if $\pi_{\points}(F)=0$ then $\pi_{\points}(F_{i,j})=0$ for any linked pair $\{i,j\}$. Next we assume that $\pi_{\points}(F_{i,j})=0$ for any linked pair $\{i,j\}$ and prove that $\pi_{\points}(F)=0$. 

 For any linked pair $\{i,j\}$ with $i<j$, since $F_{i,j}$ is a $(2,1)$-type 3-graph, by Lemma~\ref{strengthen} there exists a labeling $\sigma_i$ of the $i$-th layer such that no vertex $v$ on the $j$-th layer has a monotone $P_3$ in its link graph $L_{F_{i,j}}(v)$. By \ref{fixtwo}, there cannot be another integer $\ell>i$ such that $\{i,\ell\}$ is also a linked pair. Therefore, the property above gives exactly one labeling of the $i$-th layer. For each of the remaining layers, we fix an arbitrary labeling and denote by $\sigma_j$ the labeling of the $j$-th layer. Now we define a labeling $\sigma$ of $V(F)$ by letting $\sigma=\sum_{i=1}^k\sigma_i$, and show that $L_F(v)$ does not contain a monotone $P_3$ for any $v\in V(F)$ .

Let $v$ be a vertex on the $i$-th layer. Then each component of $L_F(v)$ is contained in at most two layers as otherwise we can find two edges which violate \ref{fixtwo}.  If a component crosses two layers, say the $j$-th layer and the $\ell$-th layer with $j<\ell$, then the component must be bipartite, because by \ref{fixtwo} each edge containing $v$ and a vertex of the $j$-th layer must have the other vertex in the $\ell$-th layer. Moreover, by the definition of $\sigma$, the label of each vertex in the $j$-th layer is smaller than the one of each vertex in the $k$-th layer, so there is no monotone $P_3$ in such component. 

If a (nontrivial) component is contained in one layer, say the $j$-th layer, then we must have that $\{i,j\}$ is a linked pair with $j<i$. Then, because this component of $L_F(v)$ is also a component of $L_{F_{j,i}}(v)$, and $L_{F_{j,i}}(v)$ contains no monotone $P_3$ in the labeling $\sigma_j$, we conclude that the labeling $\sigma$ produces no monotone $P_3$ in this component. Therefore, the link graph of each vertex does not contain a monotone $P_3$, and $\pi_{\points}(F)=0$ follows from Lemma~\ref{strengthen}.
\end{proof}

The purpose of the subsequent definitions and lemmas is to understand the circumstances under which the union of layered graphs with vanishing uniform Tur\'{a}n density preserves the layered structure and the uniform Tur\'{a}n density. This is a crucial piece in the induction step that allows us to prove Theorem~\ref{layered}  .

Let $F$ be a layered 3-graph on $k$ layers under a layered function $f$. The \textit{reduced graph} of $F$, denoted by $F/f$, is the graph on vertex set $[k]$, in which we add a 3-edge $ijk$ if $F$ contains an edge whose vertices are respectively located on the $i$-th, $j$-th and $k$-th layers, and a directed 2-edge $\vec{ij}$ if $F$ has an edge with two vertices on the $i$-th layer and one on the $j$-th layer. In certain sense, $F/f$ can be seen as the result of contracting each layer of $F$ into a single vertex.

One can easily check that if $F_1$ and $F_2$ are two layered $3$-graphs on the same vertex set which share a common layered function $f$ such that $F_1/f=F_2/f$, then $f$ is also a layered function of the union $F_1\cup F_2$ and $(F_1\cup F_2)/f=F_1/f$, simply because the triples of layers containing edges of $F_1\cup F_2$ have not changed. In fact, the same happens if for some layers, rather than identifying the corresponding vertices in $F_1$ and $F_2$, we take the disjoint union instead. 

Let $F_1, F_2, \dots, F_\ell$ be layered $3$-graphs on the same vertex set which have $k$ layers under a common layered function $f$. Given a subset $S\subseteq \{1,2,\dots,k\}$, we define $\left(\bigoplus_{i=1}^\ell F_i\right)/S$ as the \textit{$S$-union} of these $\ell$ layered $3$-graphs, in which for each $j\in S$ and each vertex $v$ on the $j$-th layer, the corresponding copies of the vertex $v$ in these $\ell$ layered $3$-graphs are identified into a single vertex.

\begin{lemma}\label{graphsum}
    Let $F_1, F_2, \dots, F_\ell$ be layered $3$-graphs on the same vertex set and with the same reduced graph under a common layered function $f$. Suppose that $\pi_{\points}(F_i)=0$ for all $1\leq i\leq \ell$, and all of them share a common labeling $\sigma$ satisfying \ref{coloring}. Then for any set $S$ of layers containing no linked pair, the $3$-graph $F=\left(\bigoplus_{i=1}^\ell F_i\right)/S$ has $\pi_{\points}(F)=0$.
\end{lemma}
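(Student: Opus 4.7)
My strategy is to apply \cref{linked}: it suffices to verify that $F$ is layered and that $\pi_{\points}(F_{p,q})=0$ for every linked pair $\{p,q\}$ of $F$. The layered structure of $F$ comes for free from extending the function $f$ to $V(F)$ by assigning the value $f(v)$ to every copy of each vertex $v$; since the $F_i$'s share the reduced graph, conditions \ref{uniquemax}, \ref{fixmax} and \ref{fixtwo} lift immediately to $F$, and the linked pairs of $F$ coincide with those of any $F_i$.

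For each linked pair $\{p,q\}$ of $F$ with $p<q$, the hypothesis that $S$ contains no linked pair forces at least one of $p,q$ to lie outside $S$, and I would split into cases accordingly. When $p\in S$ and $q\notin S$, layer $p$ is shared but layer $q$ splits into $\ell$ disjoint copies, and for each $w$ in layer $q$, lying in a unique copy $i$, the link graph $L_{F_{p,q}}(w)$ coincides with $L_{(F_i)_{p,q}}(w)$. When $p\notin S$ and $q\in S$, layer $p$ splits while layer $q$ is shared, and for each shared $w$ the link graph $L_{F_{p,q}}(w)$ decomposes as a disjoint union $\bigsqcup_{i}L_{(F_i)_{p,q}}(w)$ across the copies of layer $p$. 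When neither $p$ nor $q$ lies in $S$, $F_{p,q}$ is itself the disjoint union of $\ell$ copies of $(F_i)_{p,q}$. In every case, I would label layer $p$ of $F$ either by $\sigma$ restricted to layer $p$ (on the shared part) or by the $\oplus$-sum of $\ell$ copies of this restriction (on any split part). Because edges do not cross copies, any monotone $P_3$ in some $L_{F_{p,q}}(w)$ must sit inside a single copy, so by \cref{strengthen} the task reduces to showing that $\sigma$ restricted to layer $p$ admits no monotone $P_3$ in any link graph $L_{(F_i)_{p,q}}(w)$.

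The main obstacle is this last local claim, which I would deduce from the coloring $\phi_i$ of $\partial F_i$ accompanying $\sigma$ in \cref{rodl}. Suppose for contradiction that $a_1,a_2,a_3$ in layer $p$ satisfy $\sigma(a_1)<\sigma(a_2)<\sigma(a_3)$ and that both $a_1a_2w$ and $a_2a_3w$ lie in $E(F_i)$. Reading off the color $\phi_i(\{a_2,w\})$ from each edge via condition \ref{coloring}, the first edge forces green when $\sigma(w)>\sigma(a_1)$ and blue when $\sigma(w)<\sigma(a_1)$, while the second edge forces blue when $\sigma(w)>\sigma(a_3)$ and red when $\sigma(w)<\sigma(a_3)$. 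Across the three possible placements of $\sigma(w)$, namely above $\sigma(a_3)$, between $\sigma(a_1)$ and $\sigma(a_3)$, or below $\sigma(a_1)$, the two forced colors on $\{a_2,w\}$ always disagree, a contradiction. Combined with the case analysis and \cref{linked}, this yields $\pi_{\points}(F)=0$.
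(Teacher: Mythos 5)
Your proof is correct and follows essentially the same route as the paper's: pass to the inherited layered function, apply Lemma~\ref{linked} to reduce to linked pairs $\{p,q\}$, and split into three cases according to which of $p,q$ lies in $S$, using the induced labeling on the shared layer or the $\oplus$-sum on a split layer. Your explicit verification (via the color of $\{a_2,w\}$) that a labeling satisfying \ref{coloring} admits no monotone $P_3$ in any link graph is a fact the paper invokes without proof, so spelling it out is a harmless bonus rather than a divergence.
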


\begin{proof}
    For every $1\leq i\leq \ell$ and each $v\in V(F)\cap V(F_i)$, we use $v_i$ to denote the corresponding copy of it in $F_i$. Clearly, for each $v\in V(F)$, assigning $f(v_i)$ to it if it comes from $V(F_i)$ produces a layered function of $F$. Therefore, by Lemma~\ref{linked}, we only need to check that $\pi_{\points}(F_{r,s})=0$ for any linked pair $\{r,s\}$. 
    
    The condition in the statement tells us that $\{r,s\}\not\subseteq S$. If neither $r$ nor $s$ is in $S$, then $F_{r,s}$ is just the disjoint union of all the $3$-graphs $(F_i)_{r,s}$, and therefore $\pi_{\points}(F_{r,s})=0$. Next we assume that exactly one of $\{r,s\}$ is in $S$. 
    If $r\in S$, then let $\sigma_r$ be the labeling of the $r$-th layer of $F$ which is induced by $\sigma$. Note that for every vertex $v$ on the $s$-th layer of $F$, there exists a unique $t$ such that $v\in V(F_t)$. Then $L_{F_{r,s}}(v)=L_{(F_t)_{r,s}}(v_t)$, which contains no monotone $P_3$ under $\sigma$, and therefore no monotone $P_3$ under $\sigma_r$. On the other hand, if $s\in S$, then for all $1\leq i\leq \ell$, let $\sigma_{i,r}$ be the labeling of the $r$-th layer of $F_i$ which is induced by $\sigma$, and let $\sigma_r=\sum_{i=1}^{\ell}\sigma_{i,r}$ be the labeling of the $r$-th layer of $F$. For each vertex $v$ on the $s$-th layer of $F$, the link graph $L_{F_{r,s}}(v)$ is the disjoint union of all the link graphs $L_{(F_i)_{r,s}}(v_i)$, each of which contains no monotone $P_3$ under $\sigma$, and therefore no monotone $P_3$ under $\sigma_r$. We conclude that in all three cases $\pi_{\points}(F_{r,s})=0$.
\end{proof}

\subsection{Putting things together}\label{sec-final}
We are almost ready to prove Theorem~\ref{layered}. The last piece of the puzzle comes from finding a way to build $F$ layer by layer. In our case, this will mean that on each step we build a tripartite $3$-graph on top of two existing layers, or a $(2,1)$-type $3$-graph on top of an existing layer. We will use the two following lemmas for these purposes.

\begin{lemma}\label{addtripartite}
    For every $\varepsilon>0$ and integer $t$, there exists an integer $m$ with the following property. If $H$ is a tripartite $3$-graph with vertex partition $U\cup V\cup W$, each with size at least $m$, and for every $u\in U$ and $v\in V$ we have $d(u,v)\geq \varepsilon|W|$, then $K^{(3)}_{t,t,t}$ is a subgraph of $H$.
\end{lemma}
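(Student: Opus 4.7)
The plan is to prove this via a standard random sampling / supersaturation argument in two steps: first pick a candidate set $S_W \subseteq W$ of size $t$ randomly, then find a copy of $K_{t,t}$ in the resulting auxiliary bipartite graph on $U \cup V$ using the Kővári–Sós–Turán theorem. The overall strategy is to reduce finding $K^{(3)}_{t,t,t}$ to a well-understood bipartite embedding problem.

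For the first step, choose $m$ sufficiently large (so that $m \geq m_0(\varepsilon, t)$ and the Kővári–Sós–Turán threshold is beaten below). Pick a uniformly random $t$-subset $S_W \subseteq W$. For every pair $(u,v) \in U \times V$, since $|N_H(u,v)| \geq \varepsilon |W|$,
\[
\Pr\bigl[S_W \subseteq N_H(u,v)\bigr] \;=\; \frac{\binom{|N_H(u,v)|}{t}}{\binom{|W|}{t}} \;\geq\; \prod_{i=0}^{t-1}\frac{\varepsilon|W| - i}{|W|-i} \;\geq\; \frac{\varepsilon^t}{2},
\]
provided $|W|\geq m$ is large enough in terms of $\varepsilon$ and $t$. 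Therefore the expected number of pairs $(u,v) \in U \times V$ with $S_W \subseteq N_H(u,v)$ is at least $(\varepsilon^t/2)|U||V|$, and we may fix a choice of $S_W$ achieving this bound.

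For the second step, form the auxiliary bipartite graph $G$ on vertex parts $U$ and $V$ by putting an edge $uv$ exactly when $S_W \subseteq N_H(u,v)$. By the previous paragraph, $G$ has at least $(\varepsilon^t/2)|U||V|$ edges, hence edge density at least $\varepsilon^t/2$. By the Kővári–Sós–Turán theorem, for $|U|,|V| \geq m$ with $m$ large enough in terms of $\varepsilon$ and $t$, $G$ must contain a copy of $K_{t,t}$; let its parts be $S_U \subseteq U$ and $S_V \subseteq V$. By construction, for every $u \in S_U$, $v \in S_V$ and $w \in S_W$ the triple $uvw$ is an edge of $H$, so $S_U \cup S_V \cup S_W$ spans the desired copy of $K^{(3)}_{t,t,t}$.

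There is no real obstacle here; the only care needed is to chain together the quantitative dependencies (first choose $m$ large relative to the Kővári–Sós–Turán threshold for density $\varepsilon^t/2$ and forbidden $K_{t,t}$, and large enough to make the approximation $\binom{\varepsilon|W|}{t}/\binom{|W|}{t} \geq \varepsilon^t/2$ valid) so that a single choice of $m$ works simultaneously for all three parts $U, V, W$.
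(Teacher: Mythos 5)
Your proof is correct, but takes a genuinely different route from the paper's. The paper proceeds in two short steps: it first double-counts pairs $(u,v)$ to see that $H$ has at least $\varepsilon|U||V||W|$ edges, then randomly samples subsets $U'\subseteq U$, $V'\subseteq V$, $W'\subseteq W$ each of size exactly $m$ to obtain a $3m$-vertex $3$-graph of edge density at least $2\varepsilon/9$, and finally invokes the black-box fact $\pi(K^{(3)}_{t,t,t})=0$ to conclude. Your argument instead unpacks that black box: it samples a random $t$-set $S_W\subseteq W$, shows (via the binomial ratio bound $\binom{|N_H(u,v)|}{t}/\binom{|W|}{t}\geq\varepsilon^t/2$) that the auxiliary bipartite graph of pairs $(u,v)$ with $S_W\subseteq N_H(u,v)$ has density at least $\varepsilon^t/2$, and then applies the K\H{o}v\'ari--S\'os--Tur\'an theorem for $K_{t,t}$. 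Both are sound; the paper's version is shorter by deferring to the known result $\pi(K^{(3)}_{t,t,t})=0$, while yours is more self-contained (it uses only the $2$-graph K\H{o}v\'ari--S\'os--Tur\'an theorem rather than its hypergraph analogue) and makes the quantitative dependence on $\varepsilon,t$ explicit. One very minor point: you should write $\lceil\varepsilon|W|\rceil$ rather than $\varepsilon|W|$ in the binomial ratio, though this does not affect the estimate for large $|W|$.
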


\begin{proof}
    By double counting on pairs $(u,v)$, we see that $H$ has at least $\varepsilon|U||V||W|$ edges. By a simple probabilistic argument, there exist sets $U'\subseteq U$, $V'\subseteq V$, $W'\subseteq W$, each with size exactly $m$, spanning at least $\varepsilon m^3$ edges. This means that the 3-graph on these vertices has density at least $2\varepsilon/9$. The lemma follows from the fact that $\pi(K^{(3)}_{t,t,t})=0$.
\end{proof}

\begin{lemma}\label{add21}
    For every $(2,1)$-type $3$-graph $F$ with $\pi_{\points}(F)=0$ and every $\varepsilon>0$, there exists an integer $m$ with the following property. If $H$ is a $(2,1)$-type $3$-graph with vertex partition $U\cup V$, each with size at least $m$, and for each pair $u_1, u_2\in U$ we have $d_H(u_1,u_2)\geq \varepsilon|V|$, then $F$ is a subgraph of $H$.
\end{lemma}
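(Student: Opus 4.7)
The plan is to combine \cref{strengthen} (which characterises when a $(2,1)$-type 3-graph has vanishing uniform density via the absence of monotone $P_3$ in link graphs) with \cref{hb} (which lets us find, inside any $\varepsilon$-dense graph distribution, a set on which \emph{every} half-bipartite graph appears with positive probability). The strategy is to embed $A$ into $U$ in one shot and then embed $B$ into $V$ greedily.

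First, let $V(F)=A\cup B$ be the bipartition with $|e\cap A|=2$ and $|e\cap B|=1$ for every $e\in E(F)$. Since $\pi_{\points}(F)=0$, \cref{strengthen} supplies a labeling $\tau$ of $A$ under which every link graph $L_F(u)$, $u\in B$, is a half-bipartite graph on $A$. On the side of $H$, I would fix an arbitrary labeling of $U$ and define a graph distribution $X$ on $U$ by sampling $v\in V$ uniformly at random and setting $X=L_H(v)$. The codegree hypothesis $d_H(u_1,u_2)\geq \varepsilon|V|$ translates to $\Pr(u_1u_2\in E(X))\geq \varepsilon$, so $X$ is $\varepsilon$-dense.

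Next, I would apply \cref{hb} with parameter $k=|A|$ to obtain constants $n_0$ and $\delta>0$; provided $|U|\geq n_0$, this yields a set $S\subseteq U$ of size $|A|$ on which every half-bipartite graph (in the order inherited from $U$) appears in $X$ with probability at least $\delta$. Let $\phi\colon A\to S$ be the unique order-preserving bijection with respect to $\tau$ and the chosen labeling of $S$. For each $u\in B$, the image $\phi(L_F(u))$ is a half-bipartite graph on $S$, so at least $\delta|V|$ vertices $v\in V$ satisfy $\phi(L_F(u))\subseteq L_H(v)$, i.e.\ $v$ is an eligible image for $u$.

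Finally, I would embed $B$ greedily: process the vertices of $B$ one by one, choosing for each $u$ an eligible $v_u\in V$ distinct from the previously selected ones. Since there are always at least $\delta|V|$ eligible candidates and at most $|B|-1$ forbidden choices, it suffices to take $m\geq\max\{n_0,\lceil|B|/\delta\rceil+1\}$. Composing $\phi$ on $A$ with $u\mapsto v_u$ on $B$ then gives an embedding of $F$ into $H$. The only nontrivial step is the passage from the vanishing-density hypothesis on $F$ to the existence of many half-bipartite copies in link graphs, but this is exactly what \cref{strengthen} and \cref{hb} provide, so no additional ideas are needed beyond assembling these tools.
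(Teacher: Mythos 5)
Your proof is correct and follows essentially the same approach as the paper: use \cref{strengthen} to produce a labeling making each link graph $L_F(u)$ half-bipartite, convert the codegree condition to an $\varepsilon$-dense graph distribution, apply \cref{hb} to find a set $S$ supporting all half-bipartite graphs with probability at least $\delta$, and then embed $B$ into $V$ greedily. The only difference is that you make the citation of \cref{strengthen} and the final choice of $m$ slightly more explicit, but the argument is identical in substance.
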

 
\begin{proof}
    Let $U'\cup V'$ be a corresponding vertex partition of $V(F)$ with $|U'|=k$. Take an ordering of the vertices of $U$, and consider the graph distribution $X$ on $U$, obtained by randomly sampling a vertex $v\in V$ and taking $X=L_H(v)$. Since $d_H(u_1,u_2)\geq \varepsilon|V|$, we have that $X$ is $\varepsilon$-dense. Therefore, by Lemma~\ref{hb}, there exists $\delta=\delta_{\ref{hb}}(k,\varepsilon)$ such that, if $m$ is large enough, there exists a set $S\subseteq U$ of size $k$ for which every half-bipartite graph on $S$ appears in $X$ with probability at least $\delta$.

    Since $\pi_{\points}(F)=0$, there exists a labeling of $U'$ in which no vertex $v$ of $V'$ contains a monotone $P_3$ in its link graph, in other words, $L_F(v)$ is a half-bipartite graph on $U'$ for each $v\in V'$. Now we identify the vertices of $U'$ with the vertices of $S$, in the same order. If $|V|\ge m\geq |V'|/\delta$, then for each $v\in V'$, there exist at least $|V'|$ vertices of $V$ each of which has $L_F(v)$ as a subgraph of its link graph. Thus we can select the image of the vertices of $V'$ one by one from $V$, ensuring that we do not select the same vertex twice, to complete the copy of $F$ in $H$.
\end{proof}

We are now ready to prove~\cref{layered}.
\begin{proof}[Proof of Theorem~\ref{layered}]
Let $F$ be a layered 3-graph on $k$ layers with $\pi_{\points}(F)=0$. We proceed our proof by induction on $k$ to show that $\pi_{\co}(F)=0$. If $k=1$, then by \ref{uniquemax} the $3$-graph $F$ has no edge, and so $\pi_{\mathrm{co}}(F)=0$ holds trivially. Now we assume that $k\geq 2$, and the induction hypothesis is that all layered 3-graphs $F'$ on $k-1$ layers with $\pi_{\points}(F')=0$ satisfy $\pi_{\co}(F')=0$. Given $\varepsilon>0$, we choose 
$$1/n\ll 1/m\ll \varepsilon,1/|V(F)|.$$
Let $H$ be an $n$-vertex $3$-graph with $\delta_{\co}(H)\geq\varepsilon n$. Our goal is to show that $H$ contains $F$ as a subgraph.

As $\pi_{\points}(F)=0$, we can fix a labeling $\sigma$ of $F$ as in Lemma~\ref{permutation} so that the link graph of every vertex does not contain a monotone $P_3$. For $1\leq i\leq k$, denote by $L_i$ the $i$-th layer of $F$. Let $\widetilde{F}$ be the layered $3$-graph on $k-1$ layers obtained from $F$ by deleting all vertices in $L_k$. Clearly, $\pi_{\points}(\widetilde{F})=0$, and so $\pi_{\co}(\widetilde{F})=0$ follows from our induction hypothesis. If no edge in $F$ intersects $L_k$, then $\pi_{\co}(F)=\pi_{\co}(\widetilde{F})=0$. So we assume that at least one edge intersects $L_k$, and then by \ref{uniquemax}, each such edge contains a unique vertex in $L_k$. Moreover, by \ref{fixmax}, the layers $L_i$ and $L_j$ containing the other two vertices in each such edge are the same. We consider the two following cases to finish our proof. 

\vspace{0.2cm}

\noindent {\bf Case 1: \bm{$i=j$}.} 
Note that $\{i,k\}$ is a linked pair and $\pi_{\points}(F_{i,k})=0$. Setting $F_{i,k}$ and $\varepsilon/2$ into Lemma~\ref{add21} produces a value of $m$. Suppose that $L_i=\{v_1,v_2,\dots,v_t\}$ with $\sigma(v_1)<\sigma(v_2)<\cdots<\sigma(v_t)$. For every $t$-subset $S$ of $[m]$ with $S=\{s_1,s_2,\dots,s_t\}$ and $s_1<s_2<\cdots<s_t$, we construct a $3$-graph $\widetilde{F}_S$ on $k-1$ layers from $\widetilde{F}$ by replacing $L_i$ with $[m]$ such that $s_{\ell}$ is a copy of $v_{\ell}$ for every $1\leq\ell\leq t$ and $r$ is an isolated vertex for each $r\in[m]\setminus S$.

Consider the $3$-graph $F'=\left(\bigoplus_{S\in{[m] \choose t}} \widetilde{F}_S\right)/\{i\}$. Observe that by Lemma~\ref{graphsum}, $F'$ is a layered $3$-graph on $k-1$ layers with $\pi_{\points}(F')=0$. Thus $\pi_{\mathrm{co}}(F')=0$ follows from our induction hypothesis, implying that $H$ contains a copy of $F'$ if $n$ is large enough. Now let $U$ be the vertices of $H$ corresponding to the $i$-th layer of $F'$ (which, remember, is $[m]$), and let $V$ be the set of vertices of $H$ not in the copy of $F'$. If $n$ is large enough, then we have both $|V|\geq m$ and every pair of vertices in $U$ has codegree in $V$ at least $\varepsilon n/2$. By the choice of $m$, there exists a copy of $F_{i,k}$ between $U$ and $V$. If $S$ is the set of vertices of $[m]$ corresponding to the vertices of $U$ in the copy of $F_{i,k}$, then the copy of $F_S$ together with the copy of $F_{i,k}$ form a copy of $F$ in $H$, as we wanted.
\vspace{0.2cm}

\noindent {\bf Case 2: \bm{$i\neq j$}.} Suppose that $L_i=\{u_1,u_2,\dots,u_{t_1}\}$ with $\sigma(u_1)<\sigma(u_2)<\cdots<\sigma(u_{t_1})$ and $L_j=\{w_1,w_2,\dots,w_{t_2}\}$ with $\sigma(w_1)<\sigma(w_2)<\cdots<\sigma(w_{t_2})$. Setting $\varepsilon/2$ and $t=\max\{|L_i|,|L_j|,|L_k|\}$ into Lemma~\ref{addtripartite} produces a value of $m$. Let $\mathcal{X}=[m]$ and $\mathcal{Y}=[2m]\setminus[m]$. For every $t_1$-subset $X=\{x_1,x_2,\dots,x_{t_1}\}\subseteq\mathcal{X}$ and every $t_2$-subset $Y=\{y_1,y_2,\dots,y_{t_2}\}\subseteq\mathcal{Y}$ with $x_1<x_2<\cdots<x_{t_1}$ and $y_1<y_2<\cdots<y_{t_2}$, we construct a $3$-graph $\widetilde{F}_{X,Y}$ on $k-1$ layers from $\widetilde{F}$ by replacing $L_i$ with $\mathcal{X}$ and replacing $L_j$ with $\mathcal{Y}$ such that $x_{r}$ is a copy of $u_{r}$ for every $1\leq r\leq t_1$, $y_{s}$ is a copy of $w_{s}$ for every $1\leq s\leq t_2$ and $\ell$ is an isolated vertex for each $\ell\in[2m]\setminus (X\cup Y)$.

Consider the $3$-graph $F'=\left(\bigoplus_{(X,Y)\in{\mathcal{X} \choose t_1}\times{\mathcal{Y} \choose t_2}} \widetilde{F}_{X,Y}\right)/\{i,j\}$, which is a $3$-graph on $k-1$ layers. Note also that by~\ref{fixtwo}, $\{i,j\}$ is not a linked pair, and hence $\pi_{\points}(F')=0$ by Lemma~\ref{graphsum}. By our induction hypothesis, we have $\pi_{\mathrm{co}}(F')=0$, meaning that $H$ contains a copy of $F'$ if $n$ is large enough. Let $U$ and $V$ be the sets of vertices on the $i$-th and $j$-th layers of this copy of $F'$, and let $W$ be the set of vertices of $H$ outside of this copy of $F'$. If $n$ is large enough, then we have that $|W|\geq m$ and that for all pairs $uv\in U\times V$, the codegree of $uv$ in $W$ is at least $\varepsilon n/2$. By the choice of $m$, there exists a copy of $K^{(3)}_{t,t,t}$ between $U$, $V$ and $W$, and therefore a copy of $K^{(3)}_{t_1,t_2,|L_k|}$ with $t_1,t_2,|L_k|$ vertices from $U,V,W$, respectively. If $X$ and $Y$ are the sets of vertices of $\mathcal{X}$ and $\mathcal{Y}$ corresponding to the vertices of $U$ and $V$ in the copy of $K^{(3)}_{t_1,t_2,|L_k|}$, then the copy of $F_{X,Y}$ together with the copy of $K^{(3)}_{t_1,t_2,|L_k|}$ form a copy of $F$ in $H$, as we wanted.

We have shown that in all cases $H$ contains $F$ as a subgraph, and so $\pi_{\mathrm{co}}(F)=0$ as desired.
\end{proof}

\section{Is the layered structure necessary?}
In order to prove Conjecture~\ref{Conj1}, one would need to prove that all hypergraphs $F$ which are not layered satisfy $\pi_{\mathrm{co}}(F)>0$. It seems, unlike the usual Tur\'an density $\pi$ and the uniform Tur\'an density $\pi_{\points}$, there are no simple explicit constructions with linear codegree that work for \emph{all} non-layered hypergraphs. We give an example to illustrates the difficulty in trying to find such simple explicit constructions. This example comes from gluing together two layered 3-graphs. While both layered 3-graphs are very simple and have codegree Tur\'an density zero, the resulting graph $F$ is not layered and the simplest constructions showing $\pi_{\co}(F)>0$ we can find is already relatively complex.


\subsection{ A specific example}

Let $F_1$ be the hypergraph on vertices $\{a,b,c,d,e\}$, with edges $\{abc, abd, cde\}$. Let $F_2$ be the hypergraph on vertices $\{e,f,g,h,i,j,k,\ell\}$ with edges $\{fgh, fgi, hij, hik, jk\ell, eh\ell\}$. Let $F=F_1\cup F_2$.

\begin{figure}[h]
\centering
\includegraphics[width=0.3\textwidth]{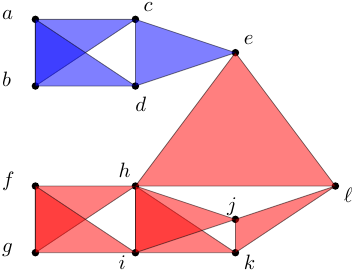}
\caption{The hypergraph $F$, with $F_1$ in blue and $F_2$ in red.}
\end{figure}

Observe that $F_1$ and $F_2$ are both layered and $\pi_{\points}(F_1)=\pi_{\points}(F_2)=0$, so by Theorem~\ref{layered} we have $\pi_{\mathrm{co}}(F_1)=\pi_{\mathrm{co}}(F_2)=0$. On the other hand, one can check that $F$ is not layered.

We now give a construction showing $\pi_{\mathrm{co}}(F)\geq 1/12$, and in particular that the codegree density of $F$ is positive. Consider the following hypergraph $H$ with vertex set is partitioned into twelve equal parts $\mathcal{P}=\{A, B, C, D, E, F, G, H, I, J, K, L\}$, and an edge is placed if it belongs to one of the triples in 
\begin{align*}T=\{AAB&, &ACI&, &ADG&, &AEE&, &AFF&, &AHJ&, &AKL&, &BBC&, &BDJ&, &BEH&,\\ BFK&, &BGG&, &BIL&, &CCD&, &CEF&, &CGK&, &CHH&, &CJL&, &DDE&, &DFL&,\\ DHK&, &DII&, &EGI&, &EJJ&, &EKK&, &ELL&, &FGJ&, &FHI&, &GHL&, &IJK&\}.\end{align*}

A relevant property of $T$ is that every pair of parts $XY$, including those with $X=Y$, is contained in exactly one triple in $T$. Because of this, any pair of edges $uvw$ and $uvw'$ in $H$ which have two common vertices must satisfy that $w$ and $w'$ belong to the same part in $\mathcal{P}$. Observe also that in all triples in $T$ of the form $XXY$ we have $Y\in\{A, B, C, D, E\}$.

In every copy of $F_1$ in $H$, because of the pair of edges $abc$ and $abd$, the vertices $c$ and $d$ belong to the same part of $\mathcal{P}$, which forces $e\in \{A, B, C, D, E\}$. Meanwhile, in every copy of $F_2$ in $H$, because of the pair of edges $fgh$ and $fgi$ we have that $h$ and $i$ belong to the same part of $\mathcal{P}$. This part determines the partition class of $j$ and $k$ through the edges $hij$ and $hik$, which in turn determine the class of $\ell$ through $jk\ell$, which finally determines the class of $e$ through $eh\ell$. Checking all twelve classes for $h$ and $i$, we find that for none of them the resulting vertex $e$ is in $\{A, B, C, D, E\}$, meaning that there is no copy of $F$ in $H$.

We remark that vertex transitive hypergraphs with linear minimum codegree will contain $F$: there will be copies of $F_1$ and $F_2$, of which we can find isomorphisms in which the vertex $e$ coincides.

\subsection{Equivalence of Conjectures \ref{Conj1} and \ref{Conj2}}
In this section, we prove \cref{thm:C1-C2-same}, that is, Conjectures \ref{Conj1} and \ref{Conj2} are equivalent. By Theorem \ref{layered}, to prove the equivalence, it suffices to show that any $3$-graph $F$ with $\pi_{\co}(F)=0$ is layered assuming that this holds for linear $3$-graphs. For this purpose, we will use the following operation to transform a $3$-graph into a linear $3$-graph while preserving the (non)layered structure.

For any $3$-graph $F$ and any vertex $v\in V(F)$, let $F_v$ be the $3$-graph obtained by the following operation.
    \begin{itemize}
        \item Delete vertex $v$, and for each $uw\in L_F(v)$, add a new vertex $v_{uw}$ and a new edge $uwv_{uw}$.  
        \item Add three new vertices $x_v,y_v,z_v$, and then for each $uw\in L_F(v)$, add three more new vertices $x_{uw},y_{uw},z_{uw}$ and the following six new edges 
        $$x_vv_{uw}x_{uw},x_vy_{uw}z_{uw},y_vv_{uw}y_{uw},y_vx_{uw}z_{uw},z_vv_{uw}z_{uw},z_vx_{uw}y_{uw},$$
        which is a Fano plane on $\{x_v,y_v,z_v,v_{uw},x_{uw},y_{uw},z_{uw}\}$ with one edge $x_vy_vz_v$ removed and is denoted by $\mathbf{F}_{uw}$. 
    \end{itemize}
Denote by $\mathcal{L}_v=\{x_v,y_v,z_v\}\cup\{v_{uw},x_{uw},y_{uw},z_{uw}:uw\in L_F(v)\}$ the collection of all these new vertices. Note that for each $u\in\mathcal{L}_v$, its link graph is a matching. We say that the vertex $v$ is \textit{linearized}. 

Given a $3$-graph $F$ and a function $f:V(F)\rightarrow \mathbb{N}$, we say that $f$ is a \textit{semi-layered function} of $F$ if it satisfies conditions \ref{uniquemax} and \ref{fixmax}, and define the \textit{cardinality} of $f$ to be the size of its range. Furthermore, a semi-layered function is \textit{minimum} if it has the minimum cardinality over all semi-layered functions. For the sake of convenience, for any edge $e=uvw\in E(F)$, we use $f(e)$ to denote the multiset $\{f(u),f(v),f(w)\}$.

\begin{proposition}\label{func}
Every minimum semi-layered function is a layered function.  
\end{proposition}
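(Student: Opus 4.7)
I will prove the contrapositive: if a semi-layered function $f$ is not layered, then $f$ is not minimum. So suppose $f$ is semi-layered but fails condition \ref{fixtwo}. Then there exist edges $e_1, e_2 \in E(F)$ and a labeling of their vertices, $e_1 = uvw$ and $e_2 = u'v'w'$, such that $f(u) = f(u') = a$, $f(v) = f(v') = b$, and $f(w) = c_1 \neq c_2 = f(w')$; without loss of generality, $c_1 > c_2$. Condition \ref{uniquemax} ensures that each $e_i$ has a unique strict maximum label, and \ref{fixmax} forces these two maxes to be distinct (otherwise the label multisets $\{a,b,c_1\}$ and $\{a,b,c_2\}$ would coincide, giving $c_1 = c_2$).

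To exhibit a semi-layered function $f'$ of strictly smaller cardinality, the natural candidate is the \emph{merge} of $c_1$ and $c_2$: set $f'(x) = c_1$ if $f(x) = c_2$, and $f'(x) = f(x)$ otherwise; then $|f'(V(F))| = |f(V(F))| - 1$, so the only remaining task is to verify that $f'$ satisfies \ref{uniquemax} and \ref{fixmax}. It is convenient to work with the reduced ``label'' multi-hypergraph $\mathcal{H}$ on vertex set $R := f(V(F))$, whose multi-edges are the label multisets $M_m$ of edges of $F$ grouped by their max label $m$ (well-defined by \ref{fixmax}); the verification reduces to checking how each multi-edge of $\mathcal{H}$ behaves under the substitution $c_2 \mapsto c_1$.

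The verification splits into subcases according to which vertex of $e_2$ carries its strict max. In the \emph{benign} subcase --- both $c_1, c_2$ are the strict maxes of their own multi-edges (so $c_1, c_2 > a, b$), no multi-edge of $\mathcal{H}$ contains both $c_1$ and $c_2$ except with a third max exceeding $c_1$, and no multi-edge of $\mathcal{H}$ has the form $\{m, y, c_2\}$ with $c_2 < m < c_1$ --- the merge $f'$ directly satisfies \ref{uniquemax} and \ref{fixmax}, giving the required contradiction. In the \emph{obstructed} subcases (where the max of $e_2$ is $a$ or $b$, or where some multi-edge of $\mathcal{H}$ contains $c_2$ with max strictly between $c_2$ and $c_1$), one checks that the obstructing multi-edge together with $M_{c_2}$ produces a \emph{new} failure of \ref{fixtwo} involving labels strictly between $c_2$ and $c_1$. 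Applying the same merging argument to this new failing pair yields a cascade which must terminate with a valid merge, since the ``gap'' between the two failing labels strictly decreases at each step.

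The principal obstacle is to formalize this cascade rigorously: each type of obstruction must be shown to yield a new failing pair with strictly smaller gap, and the argument must cover the various combinations of maxes arising in the failure of \ref{fixtwo} (notably the case in which one of the shared labels $a, b$ is itself the strict max of $e_2$). I anticipate handling this by a well-founded induction on the number of max-labels $m$ with $c_2 < m < c_1$, at each step either completing the merge or passing to a failing pair with a strictly shorter interval of intermediate max-labels, which bounds the length of the cascade and guarantees that a semi-layered reduction of $f$ is always found.
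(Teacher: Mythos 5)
Your high-level strategy — merge two labels to strictly decrease the cardinality while preserving \ref{uniquemax} and \ref{fixmax} — is exactly the strategy the paper uses, and your opening observations (each $e_i$ has a unique strict max; \ref{fixmax} forces $\max f(e_1)\neq\max f(e_2)$) are correct. But you merge in the \emph{wrong direction}, and this is not a cosmetic choice: it is the sole source of the ``obstructed subcases'' and of the cascade you then need, and that cascade, as sketched, is false. A little more work with \ref{uniquemax} shows that $c_1$ must be the strict maximum of $f(e_1)$, hence $\max f(e_1)=c_1>\max f(e_2)$. The paper therefore merges the \emph{larger} label \emph{down}: set $g(x)=c_2$ when $f(x)=c_1$ and $g(x)=f(x)$ otherwise. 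Because $c_1$ is the strict max of $e_1$, by \ref{fixmax} every edge with max label $c_1$ has label multiset exactly $\{a,b,c_1\}$, and the merge turns this into $\{a,b,c_2\}=f(e_2)$, already known to satisfy \ref{uniquemax}. Edges with max $<c_1$ are untouched; edges with max $>c_1$ may contain a $c_1$-label, but replacing it by the smaller $c_2$ cannot disturb the strict maximum and \ref{fixmax} there is inherited. That is the whole proof — no subcases, no cascade.

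Your merge $c_2\mapsto c_1$ fails precisely because $c_2$ need not be the max of anything, and replacing a small label by a larger one can raise the max of unrelated edges, creating fresh collisions that need not reduce to a new failure of \ref{fixtwo}. Concretely, take vertices labeled so that $f(e_1)=\{1,2,5\}$, $f(e_2)=\{1,2,3\}$ (so $c_1=5$, $c_2=3$), and a disjoint third edge $e_3$ with $f(e_3)=\{3,3,4\}$. This $f$ is semi-layered and fails \ref{fixtwo} only at $e_1,e_2$. After your merge $3\mapsto5$, $e_3$ has labels $\{5,5,4\}$, violating \ref{uniquemax} outright; and the multi-edges $M_{c_2}=\{1,2,3\}$ and $M_4=\{3,3,4\}$ share only the single label $3$, so they do not supply the ``new failure of \ref{fixtwo}'' your well-founded induction needs to recurse on. The anticipated cascade thus has no base, and the gap is real; reversing the merge direction removes it entirely.
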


\begin{proof}
Let $F$ be a $3$-graph and $f$ be a minimum semi-layered function of $F$. Suppose on the contrary that $f$ is not a layered function of $F$. Then there are two edges $e,e'\in E(F)$ satisfying that $|f(e)\cap f(e')|=2$. Therefore, $\max f(e)\neq\max f(e')$ holds as otherwise $f(e)=f(e')$ would follow from condition \ref{fixmax}. Let $p=\max f(e)$, $t=\max f(e')$ and $q$ be the unique element in $f(e')\setminus f(e)$. Without loss of generality, we may assume that $p>t\geq q$. For each $v\in V(F)$, let 

\begin{equation}
 g(v)=
\begin{cases}
f(v),    &  \text{if $f(v)\neq p$;}\\
q,    &  \text{if $f(v)= p$.}\nonumber
\end{cases}   
\end{equation}
Note that $g(e)=g(e')=f(e')$, and it is not hard to see that

($\star$)~either $\max g(\hat{e})=\max f(\hat{e})$ or $\max g(\hat{e})=t$ for any edge $\hat{e}$ with $p\in f(\hat{e})$. 

We show that $g$ is also a semi-layered function of $F$, which leads to a contradiction as the cardinality of $g$ is smaller than the one of $f$.

Let $e_0$ be an edge in $E(F)$ with $\max f(e_0)=\ell$. If $\ell>p$, then $g(e_0)$ has a unique maximum element because $p>q$ and $f$ satisfies \ref{uniquemax}. If $\ell<p$, then $g(e_0)=f(e_0)$ as $p\notin f(e_0)$, implying that $g(e_0)$ also has a unique maximum element. If $\ell=p$, then $f(e_0)=f(e)$ by condition \ref{fixmax}. Therefore, $g(e_0)=g(e)=g(e')=f(e')$, which implies that $g(e_0)$ has a unique maximum element as $f(e')$ does. So $g$ satisfies condition \ref{uniquemax}. 

Let $e_1,e_2$ be two edges with $\max g(e_1)=\max g(e_2)=\ell$. If $\ell\neq t$, then by~($\star$) we must have $\max f(e_1)=\max f(e_2)=\ell$, implying that $f(e_1)=f(e_2)$. Therefore, $g(e_1)=g(e_2)$ follows. If $\ell=t$, then for each $i\in\{1,2\}$, either $\max f(e_i)=t$ or $\max f(e_i)=p$ , which implies that either $f(e_i)=f(e)$ or $f(e_i)=f(e')$. Therefore, $g(e_1)=g(e_2)$ follows from that $g(e_i)\in\{g(e),g(e')\}$ for each $i\in\{1,2\}$ and $g(e)=g(e')$. So $g$ satisfies condition \ref{fixmax}.
\end{proof}

A straightforward corollary of \cref{func} is that a $3$-graph $F$ is layered if and only if $F$ has a semi-layered function. Now we show that linearizing a vertex preserves the (non)layered structure.

\begin{proposition}\label{linear}
 For any $3$-graph $F$ and $v\in V(F)$, $F$ is layered if and only if $F_v$ is layered.   
\end{proposition}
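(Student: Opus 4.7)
The plan is to construct a layered function of one hypergraph from a layered function of the other, using the natural correspondence between edges $uvw$ of $F$ through $v$ and edges $uwv_{uw}$ of $F_v$.

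The key technical ingredient, shared by both directions, is a structural claim about the Fano-minus-one-edge gadget $\mathbf{F}_{uw}$: any layered function $g$ of $\mathbf{F}_{uw}$ must take a common value $a$ on $\{x_v,y_v,z_v\}$ and a common value $b$ on $\{v_{uw},x_{uw},y_{uw},z_{uw}\}$, with $a>b$. The proof exploits the rich symmetry of this six-edge gadget: applications of \ref{fixmax} and \ref{fixtwo} to pairs of edges sharing a common vertex force successive equalities on both sides, and \ref{uniquemax} yields the strict inequality $a>b$. With this in hand, the backward direction is relatively routine. Given a layered function $\hat f$ of $F_v$, its restriction to each $\mathbf{F}_{uw}$ is again a layered function; the claim then supplies a shared constant $a$ on $\{x_v,y_v,z_v\}$ and constants $b_{uw}$, which must all coincide in a common value $b$ by \ref{fixmax} applied across Fano gadgets (they all carry maximum $a$ and multiset $\{a,b_{uw},b_{uw}\}$). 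Setting $f(u)=\hat f(u)$ for $u\in V(F)\setminus\{v\}$ and $f(v)=b$, each $F$-edge $uvw$ has the same label multiset as $uwv_{uw}\in E(F_v)$, and the three layered conditions for $f$ reduce directly to those for $\hat f$.

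For the forward direction, extend $f$ to $\hat f$ on $V(F_v)$ by setting $\hat f(u)=f(u)$ on $V(F)\setminus\{v\}$, $\hat f(v_{uw})=\hat f(x_{uw})=\hat f(y_{uw})=\hat f(z_{uw})=f(v)$ for each $uw\in L_F(v)$, and $\hat f(x_v)=\hat f(y_v)=\hat f(z_v)=a$ for $a$ chosen as follows. If $F$ contains an edge with two vertices of label $f(v)$, then by \ref{fixtwo} of $f$ all such edges share a common third label $c$, which by \ref{uniquemax} satisfies $c>f(v)$; set $a:=c$. Otherwise, set $a$ to be an integer strictly larger than every label in the range of $f$. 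Verification of \ref{uniquemax}, \ref{fixmax}, \ref{fixtwo} then splits by edge type: edges of $F$ not containing $v$ inherit from $f$; edges of the form $uwv_{uw}$ match the multiset of the corresponding $uvw$ in $F$; and Fano-gadget edges all carry multiset $\{a,f(v),f(v)\}$ with unique maximum $a$. The only non-trivial cross-type \ref{fixtwo} interactions involve pairs $(a,f(v))$ and $(f(v),f(v))$, both of which are consistent by the choice of $a$.

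The main obstacle is precisely this choice of $a$ in the forward direction: a naive selection of $a$ as a large new label fails whenever $F$ already contains an edge with two vertices of label $f(v)$, since the Fano gadget's multiset $\{a,f(v),f(v)\}$ shares the pair $(f(v),f(v))$ with such an $F$-edge, and \ref{fixtwo} forces $a$ to coincide with the third label $c$ prescribed by the structure of $f$ on $F$. The case distinction above is exactly what ensures the extension is a genuine layered function of $F_v$. The structural claim about $\mathbf{F}_{uw}$, while intuitively plausible from the symmetry of the gadget, also requires a careful case analysis combining all three layered conditions.
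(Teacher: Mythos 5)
Your proof is correct, and both directions follow the same broad plan as the paper: transfer a (semi-)layered function across the natural correspondence $uvw\leftrightarrow uwv_{uw}$, using the rigidity of the Fano-minus-one gadget $\mathbf{F}_{uw}$ for the backward direction (a claim both you and the paper assert with only a sketch). The genuine difference is in the forward direction. The paper always sets the label of $x_v,y_v,z_v$ to a fresh value $N$ strictly larger than the range of $f$, and then only verifies conditions \ref{uniquemax} and \ref{fixmax}, invoking Proposition~\ref{func} (every $3$-graph with a semi-layered function is layered) to upgrade to a genuine layered function without ever touching \ref{fixtwo}. You instead build a full layered function of $F_v$ directly, which is exactly what forces your case distinction: if $F$ already has an edge with two vertices labeled $f(v)$, the Fano-gadget edges (multiset $\{a,f(v),f(v)\}$) and that $F$-edge share the pair $(f(v),f(v))$, so \ref{fixtwo} pins $a$ to the common third label $c$ produced by \ref{fixtwo} and \ref{uniquemax} of $f$; only when no such $F$-edge exists can $a$ be a fresh maximum. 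Your case analysis is sound — in Case~1 one also needs, and you implicitly use, that by \ref{fixmax} of $f$ every $F$-edge with maximum $c$ already has multiset $\{c,f(v),f(v)\}$, so the gadget edges do not clash under \ref{fixmax} either — but the paper's route via Proposition~\ref{func} is shorter precisely because it sidesteps this interaction entirely. The backward directions are essentially identical; as you note, the restriction $f$ already satisfies all three conditions verbatim, so the paper's appeal to semi-layeredness there is an (innocuous) overkill.
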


\begin{proof} 
 Suppose that $F$ is layered, and let $f$ be a layered function of $F$. Now we define a function $g:V(F_v)\rightarrow \mathbb{N}$ by setting

\begin{equation}
 g(u)=
\begin{cases}
f(u),    &  \text{if $u\in V(F_v)\setminus\mathcal{L}_v$;}\\
f(v),    &  \text{if $u\in \mathcal{L}_v\setminus\{x_v,y_v,z_v\}$;}\\
N,       &  \text{if $u\in \{x_v,y_v,z_v\}$,}\nonumber
\end{cases}   
\end{equation}
\noindent where $N=\max \{f(u): u\in V(F)\}+1$. 
Next we prove that $F_v$ is layered by showing that $g$ is a semi-layered function of $F_v$.

One can easily verify that $g$ satisfies condition \ref{uniquemax}. Now we show that $g$ also satisfies condition \ref{fixmax}. Let $e_1,e_2\in E(F_v)$ be any two edges with $\max g(e_1)=\max g(e_2)=M$. If $M=N$, then $e_1,e_2\subseteq\mathcal{L}_v$ and $g(e_1)=g(e_2)=\{f(v),f(v),N\}$. If $M\neq N$, then for any $i\in\{1,2\}$, either $e_i\cap\mathcal{L}_v=\varnothing$ or $e_i=uwv_{uw}$ for some $uw\in L_F(v)$. 
By the definition of $g$, it follows that either $g(e_i)=f(e_i)$ or $g(e_i)=f(uwv)$ for some $uw\in L_F(v)$, implying that $g(e_1)=g(e_2)$ as $f$ satisfies condition \ref{fixmax}. Therefore, $g$ satisfies condition \ref{fixmax}.

Reversely, suppose that $F_v$ is layered and $g:V(F_v)\rightarrow \mathbb{N}$ is a layered function of $F_v$. For each $uw\in L_F(v)$, since $\mathbf{F}_{uw}$ is a Fano plane with one edge removed, it is not hard to verify that the layered function $g$ must satisfy that 
$$g(x_v)=g(y_v)=g(z_v)>g(v_{uw})=g(x_{uw})=g(y_{uw})=g(z_{uw}).$$
Furthermore, by condition \ref{fixmax}, we know that $g(v_1)=g(v_2)$ for any two vertices $v_1,v_2\in\mathcal{L}_v\setminus\{x_v,y_v,z_v\}$. 
Let $\hat{v}$ be a vertex in $\mathcal{L}_v\setminus\{x_v,y_v,z_v\}$. Define a new function $f:V(F)\rightarrow \mathbb{N}$ by setting
 \begin{equation}
 f(u)=
\begin{cases}
g(\hat{v}),    &  \text{if $u=v$;}\\
g(u),          &  \text{if $u\in V(F)\setminus \{v\}$.}\nonumber
\end{cases}   
\end{equation}
Note that for any edge $uwv\in E(F)$, 
$$f(uwv)=\{g(u),g(w),g(\hat{v})\}=\{g(u),g(w),g(v_{uw})\}=g(uwv_{uw}).$$
Combining with that $f(e)=g(e)$ for any $e\in E(F)$ not containing $v$, it clearly holds that $f$ is a semi-layered function of $F$, and thus $F$ is layered. 
\end{proof}

The following proposition shows that linearizing a vertex of any $3$-graph will not increase its codegree Tur\'{a}n density.

\begin{proposition}\label{lintur}
For any $3$-graph $F$ and $v\in V(F)$, $\pi_{\co}(F_v)\leq\pi_{\co}(F)$.
\end{proposition}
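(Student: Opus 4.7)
The plan is to show that for every $\varepsilon > 0$, any sufficiently large $n$-vertex $3$-graph $H$ with $\delta_{\co}(H) \geq (\pi_{\co}(F) + \varepsilon)n$ contains a copy of $F_v$. I construct the copy in three stages: first the ``skeleton'' $F - v$, then the $v_{uw}$ anchors, and finally the Fano-minus-edge gadgets $\mathbf{F}_{uw}$.

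For the first two stages, let $F'_v := (F - v) \cup \{uwv_{uw} : uw \in L_F(v)\}$; identifying each $v_{uw}$ with a distinct copy of $v$ exhibits $F'_v$ as a subgraph of the blow-up $F[d]$ with $d = |L_F(v)|$. Since blow-ups preserve the codegree Tur\'an density, $\pi_{\co}(F'_v) \leq \pi_{\co}(F[d]) = \pi_{\co}(F)$, and standard supersaturation yields $\Omega(n^{|V(F'_v)|})$ embeddings $\phi \colon F'_v \hookrightarrow H$. Each such $\phi$ provides a copy of $F - v$ together with distinct anchors $\phi(v_{uw}) \in N_H(\phi(u), \phi(w)) \setminus \phi(F-v)$.

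Stage 3, the technical heart, is to find vertices $x_v, y_v, z_v$ and, for each $uw$, $x_{uw}, y_{uw}, z_{uw}$, all pairwise distinct and disjoint from $\phi(F'_v)$, so that each $\mathbf{F}_{uw}$ embeds in $H$. The key structural observation is that the union $G$ of all the gadgets is a $(2,1)$-type linear $3$-graph with bipartition $\{x_v, y_v, z_v\} \cup \bigcup_{uw}\{v_{uw}, x_{uw}, y_{uw}, z_{uw}\}$, and its link graphs at $x_v, y_v, z_v$ are perfect matchings. Hence $\pi_{\points}(G) = 0$ (since $G$ is linear, see \cref{rodl}) and $\pi_{\co}(G) = 0$ by \cref{twoone}, so by supersaturation $H$ has $\Omega(n^{|V(G)|})$ copies of $G$. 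The catch is that we need a copy of $G$ whose $v_{uw}$-vertices coincide with the pre-specified anchors $\phi(v_{uw})$.

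I would handle this anchored embedding as follows. For each $w \in V(H)$, let $R_w \subseteq V(H)^3$ be the set of triples $(x,y,z)$ admitting a completion to a Fano-minus-edge in $H$ with $w$ in the $v_{uw}$-role and $(x,y,z)$ in the $(x_v,y_v,z_v)$-role; an averaging consequence of the supersaturation count of Fano-minus-edges in $H$ shows that a positive fraction of vertices $w$ satisfy $|R_w| = \Omega(n^3)$. By a further averaging over the $\Omega(n^{|V(F'_v)|})$ embeddings $\phi$, one can choose $\phi$ so that every anchor $\phi(v_{uw})$ is such a ``good'' vertex. The main obstacle is then to locate a common triple $(x_v, y_v, z_v) \in \bigcap_{uw \in L_F(v)} R_{\phi(v_{uw})}$: a naive union bound is insufficient when each $|R_w|$ is only a constant fraction of $n^3$, so I expect this step to require a Ramsey-type extraction in the spirit of \cref{halfbip} and \cref{hb}, iteratively refining the ambient vertex set to maintain per-vertex completion densities. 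Once the common triple is secured, the remaining gadget vertices $(x_{uw}, y_{uw}, z_{uw})$ can be chosen pairwise distinct by a greedy argument, completing the embedding of $F_v$ in $H$.
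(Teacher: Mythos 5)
Your proposal identifies the right ingredients — blow-ups preserve $\pi_{\co}$, and the gadget graph $F_v[\mathcal{L}_v]$ is $(2,1)$-type, linear, and hence has vanishing codegree Tur\'an density via Lemma~\ref{add21} — but it organizes the embedding in a way that creates an artificial and unresolved difficulty. By fixing the anchors $\phi(v_{uw})$ in Stage~2 (using a blow-up of $v$ of size only $d=|L_F(v)|$) and then trying to complete the Fano gadgets around those pre-specified vertices in Stage~3, you are forced into exactly the ``anchored embedding'' problem you flag as the technical heart. Your sketch of how to handle it — averaging to find ``good'' vertices $w$ with $|R_w|=\Omega(n^3)$, then a ``Ramsey-type extraction'' to find a common triple in $\bigcap_{uw} R_{\phi(v_{uw})}$ — is not an argument; a constant density of good triples per vertex does not by itself yield a common triple over $d$ fixed vertices, and you do not say how the iterative refinement would work or why it terminates. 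This is a genuine gap.

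The paper's proof avoids the problem entirely by blowing up $v$ into $t$ copies with $t$ \emph{large} (at least the $m$ from Lemma~\ref{add21}), not merely $t=d$. Let $U_1$ be the $t$-set of blow-up copies of $v$ in the embedded $F(v,t)$, and $U_2$ the remaining vertices of $H$. Every vertex of $U_1$ is a coneighbor of $\phi(u)\phi(w)$ for \emph{every} $uw\in L_F(v)$, and the codegree condition passes to the induced $(2,1)$-type graph $H'$ on $U_1\cup U_2$ (after deducting the small number of vertices already used). Lemma~\ref{add21} then embeds the entire gadget graph $F_v[\mathcal{L}_v]$ into $H'$ with part $\{v_{uw},x_{uw},y_{uw},z_{uw}\}$ going into $U_1$ and $\{x_v,y_v,z_v\}$ into $U_2$. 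Crucially, the images of the $v_{uw}$ are selected \emph{by the lemma}, from the large pool $U_1$, simultaneously with all the other gadget vertices — there are no pre-chosen anchors to hit, and since every member of $U_1$ is already adjacent to the relevant shadow pair, the edges $uwv_{uw}$ come for free. Your Stage~1/2 decomposition into $F'_v$ is thus a detour: rather than embedding $F'_v$ and then patching in the gadgets, embed $F(v,t)$ and then one single application of Lemma~\ref{add21} finishes everything.
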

\begin{proof}
Choose parameters satisfying $1/n\ll1/t\ll\varepsilon,1/|V(F)|$. Let $F(v,t)$ be the $3$-graph obtained from $F$ by blowing up the vertex $v$ into an independent set of size $t$, then $\pi_{\co}(F(v,t))=\pi_{\co}(F)$. Let $H$ be any $n$-vertex $3$-graph with $\delta_{\co}(H)\geq(\pi_{\co}(F)+\varepsilon)n$. Then $F(v,t)$ is a subgraph of $H$. Let $U_1$ be the vertex set corresponding to the blow-up of $v$ and $U_2=V(H)\setminus V(F(v,t))$. Let $H'$ be the $(2,1)$-type subgraph of $H$ with $V(H')=U_1\cup U_2$ and $E(H')=\{e\in H:|e\cap U_1|=2, |e\cap U_2|=1\}$. Therefore, $d_{H'}(u_1,u_2)\geq(\pi_{\co}(F)+\varepsilon/2)n$ for any two vertices $u_1,u_2\in U_1$. Further, note that $F_v[\mathcal{L}_v]$ is a $(2,1)$-type linear $3$-graph with partition $\{v_{uw},x_{uw},y_{uw},z_{uw}:uw\in L_F(v)\}\cup \{x_v,y_v,z_v\}$. We conclude that $F_v[\mathcal{L}_v]$ is a subgraph of $H'$ by \cref{add21}. Hence, $F_v$ is a subgraph of $H$, implying that $\pi_{\co}(F_v)\leq\pi_{\co}(F)$. 
\end{proof}

\begin{proof}[Proof of~\cref{thm:C1-C2-same}]
Note that, by Theorem \ref{layered}, we only need to show that any $3$-graph $F$ with $\pi_{\co}(F)=0$ is layered assuming that this is true for all linear $3$-graphs.

Let $F$ be a $3$-graph with $\pi_{\co}(F)=0$, and $\mathcal{L}(F)$ be a linear $3$-graph obtained from $F$ by linearizing, one by one, all vertices whose link graphs are not matchings. Then it follows from \cref{lintur} that $\pi_{\co}(\mathcal{L}(F))=0$. Suppose that Conjecture \ref{Conj2} is true, then $\mathcal{L}(F)$ is layered, which implies that $F$ is also layered by \cref{linear}. 
\end{proof}

\section{Concluding Remarks}

In this paper, we studied \cref{chac} for $3$-graphs and proved that any layered $3$-graph $F$ with $\pi_{\points}(F)=0$ has $\pi_{\co}(F)=0$. On the other hand, we proved that $\pi_{\points}(F)=0$ is a necessary condition and reduced the problem determining whether the layered structure is necessary to the linear $3$-graph case. One explanation for the difficulty of \cref{chac} is that the codegree Tur\'{a}n density can be arbitrarily close to zero. Towards~\cref{Conj2}, we wonder if zero is also an accumulation point for the codegree Tur\'an density for linear 3-graphs.

\begin{question}
For any $\varepsilon>0$, is there a linear $3$-graph $F$ with $0<\pi_{\co}(F)<\varepsilon$?  
\end{question}

Similar as codegree Tur\'{a}n density, one can define the $s$-degree Tur\'{a}n density, which was first mentioned by Keevash \cite{keevash2009hypergraph} and formally introduced by Lo and Markstr\"{o}m \cite{l-degree}. Let $H$ be an $n$-vertex $k$-graph. For $1\leq s\leq k-1$, the \textit{minimum $s$-degree}, denoted by $\delta_{s}(H)$, is the minimum of $d_H(S)$ over all $s$-subsets $S$ of $V(H)$. Given a family $\mathcal{F}$ of $k$-graphs, 
the \textit{$s$-degree Tur\'{a}n number} $\ex_{s}^k(n,\mathcal{F})$ is the largest $\delta_{s}(H)$ over all $n$-vertex $k$-graphs containing none of the members in $\mathcal{F}$. Similarly, the \textit{$s$-degree Tur\'{a}n density} of $\mathcal{F}$ is defined to be 
$$\pi_{s}^k(\mathcal{F})=\lim\limits_{n\to \infty}\frac{\ex_{s}^k(n,\mathcal{F})}{\binom{n}{k-s}}.$$
Lo and Markstr\"{o}m \cite{l-degree} showed that this limit always exists and $\pi_{s}^k(F)$ also possesses the supersaturation property for any $k$-graph $F$.

Let $\widetilde{\Pi}_{s}^k=\{\pi_{s}^k(\mathcal{F}) : \mathcal{F}~\text{is a family of $k$-graphs}\}$ and $\Pi_{s}^k=\{\pi_{s}^k(F) : F~\text{is a $k$-graph}\}$. Mubayi and Zhao \cite{mubayi2007co} proved that $\widetilde{\Pi}_{s}^k$ is dense in $[0,1)$ for $k\geq 3$ and $s=k-1$. Lo and Markstr\"{o}m \cite{l-degree} later extended this result to all $2\leq s\leq k-1$ (note that $\pi_{1}^k(\mathcal{F})=\pi(\mathcal{F})$, and therefore $\widetilde{\Pi}_{1}^k$ is not dense in $[0,1)$). But the same question for $\Pi_{s}^k$ still remains widely open. 

\begin{question}\label{dense}
For $k\geq 3$ and $2\leq s\leq k-1$, is $\Pi_{s}^k$ dense in $[0,1)$?  
\end{question}

As a positive evidence, Piga and Sch\"{u}lke \cite{piga2023hypergraphs} proved that zero is an accumulation point of $\Pi^k_{k-1}$ for $k\geq 3$. So it would be very interesting to figure out whether zero is also an accumulation point of $\Pi^k_{s}$ with $2\leq s\leq k-2$. 

\begin{question}\label{zero}
For $k\geq 4$ and $2\leq s\leq k-2$, is zero an accumulation point of $\Pi_{s}^k$? 
\end{question}

Generalizing \cref{chac}, one can consider the characterization problem for all $2\leq  s\leq k-1$, and it would be interesting if our results can be extended to these general cases. 

\begin{problem}
 For $k\geq 3$ and $2\leq s\leq k-1$, characterize all $k$-graphs $F$ with $\pi_{s}^k(F)=0$.    
\end{problem}

\medskip

\bibliographystyle{abbrv}
\bibliography{references.bib}
\end{document}